\newcommand{\D}{\mathbb D}
\newcommand{\RH}{\mathrm{RH}}
\newcommand{\C}{\mathbb C}
\newcommand{\Aut}{{\sf Aut}}
\def\eps{\varepsilon}
\def\Re{{\sf Re}\,}
\newcommand{\cc}[1]{\overline{{#1}}}
\def\dD{\mathop{{\rm d}_\D}}
\def\dRH{\mathop{{\rm d}_\RH}}
\newcommand{\mcite}[1]{\csname b@#1\endcsname}
\newtheoremstyle{break}
{8pt}{8pt}%
{\itshape}{}%
{\bfseries}{}%
{\newline}{}%
\theoremstyle{break}
\theoremstyle{break}
\newtheorem{theorem}{Theorem}[section]
\newtheorem*{theorem*}{Theorem}
\newtheorem{lemma}[theorem]{Lemma}
\newtheorem*{lemma*}{Lemma}
\newtheorem{corollary}[theorem]{Corollary}
\theoremstyle{break}
\newtheorem{theoremliterature}{Theorem}
\theoremstyle{break}
\newtheorem{definition}[theorem]{Definition}
\theoremstyle{remark}
\newtheorem{remark}[theorem]{Remark}
\newtheorem*{concludingremarks}{Concluding remarks}
\numberwithin{equation}{section}
\author[A.~Moucha]{Annika Moucha
	$^\dag$}
\address{A. Moucha: Department of Mathematics, University of W\"urzburg, Emil Fischer Strasse 40, 97074, W\"urzburg, Germany.
} \email{annika.moucha@uni-wuerzburg.de
}
\subjclass[2020]{Primary 30C80, 30J10}
\keywords{Schwarz lemma, bounded holomorphic functions, maximal Blaschke products}
\thanks{$^\dag\,$Partially supported by the Alexander von Humboldt Stiftung
}
\title{A Burns-Krantz type theorem for Blaschke products}
\begin{document}

\begin{abstract}
	Let $f$ be a holomorphic function mapping the open unit disk 
	into itself.  We establish a boundary version of Schwarz' lemma in the spirit of a result by Burns and Krantz and provide sufficient conditions on the local behaviour of~$f$ near some boundary point that forces~$f$ to be a Blaschke product with predescribed critical points. For the proof, a local Julia type inequality based on Nehari’s sharpening of Schwarz' lemma is established.
\end{abstract}

\maketitle

\section{Introduction}

Denote by $\D:=\{z\in\C \,:\, |z|<1\}$ the \emph{open unit disk}. The classical Schwarz lemma states that every holomorphic function $f:\D\to\D$ that fixes the origin satisfies either
\[|f'(0)|<1\quad\text{and}\quad |f(z)|< |z|\quad\text{ for all }z\in\D\setminus\{0\}\]
or $|f'(0)|=1$ and, in this case, $f$ coincides with the rotation $f(z)=f'(0)z$. The second case immediately implies the following \emph{rigidity principle}: every holomorphic function $f:\D\to\D$ that fixes some point $p\in\D$ and satisfies $f'(p)=1$ coincides with the identity function, i.e.\ $f(z)=z$. In other words: every holomorphic function $f:\D\to\D$ which locally --- that is at $p\in\D$ --- agrees with the identity function up to first order already is the identity function. In the present paper we are interested in boundary versions of this rigidity principle in the following sense.
\begin{theoremliterature}[Burns-Krantz (1994); see~{\cite[Th.~2.1]{burnsRigidityHolomorphicMappings1994}}]\label{th:BurnsKrantz}
	Let $f:\D\to\D$ be a holomorphic function and $\xi\in\partial\D$. If
	\begin{equation}\label{eq:BKcondition}
		f(z)=z+o\big(|\xi-z|^3\big),\quad \text{as }z\to\xi,
	\end{equation}
	then $f(z)=z$ for all $z\in\D$.
\end{theoremliterature}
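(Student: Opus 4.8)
\emph{The plan} is to push the problem to a half-plane, where the boundary fixed point becomes the point at infinity, and then read off everything from the Herglotz--Nevanlinna representation. After a rotation I may assume $\xi=1$. Apply the Cayley transform $C(z)=i\frac{1+z}{1-z}$, which maps $\D$ conformally onto the upper half-plane $\mathbb H=\{w:\Im w>0\}$ and sends $1$ to $\infty$, and set $g:=C\circ f\circ C^{-1}$, a holomorphic self-map of $\mathbb H$. Using the identity $C(a)-C(b)=\frac{2i(a-b)}{(1-a)(1-b)}$ one gets
\[ g(w)-w=\frac{2i\,(f(\tilde z)-\tilde z)}{(1-f(\tilde z))(1-\tilde z)},\qquad \tilde z=C^{-1}(w), \]
and since $1-\tilde z=\tfrac{2}{1-iw}$ has modulus comparable to $2/|w|$ as $w\to\infty$, while $1-f(\tilde z)\sim 1-\tilde z$, condition~\eqref{eq:BKcondition} translates into
\[ g(w)-w=o\!\left(\tfrac1{|w|}\right)\qquad\text{as }w\to\infty. \]
In particular, taking $w=iy$ with $y\to+\infty$ (which corresponds to the radial approach $\tilde z=\frac{y-1}{y+1}\to1$, so that~\eqref{eq:BKcondition} genuinely applies along this path) yields $\Im g(iy)=y+o(1/y)$.

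Next I would invoke the representation of Nevanlinna functions: as a nonconstant self-map of $\mathbb H$, $g$ admits
\[ g(w)=\beta w+\alpha+\int_{\R}\Big(\frac1{t-w}-\frac{t}{1+t^2}\Big)\,d\mu(t), \]
with $\beta\ge0$, $\alpha\in\R$ and a nonnegative measure $\mu$ satisfying $\int(1+t^2)^{-1}\,d\mu(t)<\infty$. Taking imaginary parts collapses this to the clean identity
\[ \Im g(w)=\beta\,\Im w+\Im w\int_{\R}\frac{d\mu(t)}{|t-w|^2}. \]
Evaluating at $w=iy$ and dividing by $y$ converts the asymptotics $\Im g(iy)=y+o(1/y)$ into
\[ \beta+\int_{\R}\frac{d\mu(t)}{t^2+y^2}=1+o\!\left(\tfrac1{y^2}\right). \]
Letting $y\to\infty$ (dominated convergence, using $\int(1+t^2)^{-1}d\mu<\infty$) forces $\beta=1$; feeding this back and multiplying by $y^2$ gives $y^2\int(t^2+y^2)^{-1}d\mu(t)\to0$, whereas monotone convergence shows the same integral tends to $\mu(\R)$. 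Hence $\mu\equiv0$, so $g(w)=w+\alpha$, and then $g(iy)-iy=\alpha\to0$ forces $\alpha=0$. Therefore $g=\id$ and $f=C^{-1}\circ g\circ C=\id$.

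The step I expect to be the main obstacle is the faithful translation of the third-order contact condition~\eqref{eq:BKcondition} through the Cayley transform: one must check that the cubic decay $o(|1-z|^3)$ is \emph{exactly} what produces $o(1/|w|)$, that the factor $(1-f(\tilde z))$ may indeed be replaced by $(1-\tilde z)$ without losing a power (so no cancellation is missed), and that the limit is taken along a path on which the hypothesis is valid. Everything after the reduction is soft: the Herglotz representation together with a single monotone-convergence argument does all the work, and — pleasantly — only the imaginary part of $g$ is used, so no estimate on $\mu$ beyond its total mass is required.
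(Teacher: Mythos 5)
Your proposal is correct, but it takes a genuinely different route from the paper. The paper never proves Theorem~\ref{th:BurnsKrantz} directly: it is quoted from the literature and recovered as the special case $B=\id$ of Theorem~\ref{thm:sequenceBurnsKrantzforMBP}, whose proof runs through maximal Blaschke products, the local Julia-type inequality of Lemma~\ref{lem:MBPBlaschkeJulia}, injectivity on ends of Stolz regions, and finally the Bracci--Kraus--Roth boundary Schwarz lemma (Theorem~\ref{thm:MBPsequenceBurnsKrantzforhypderivatives}). Your argument is instead classical and self-contained: Cayley transform to the upper half-plane, where \eqref{eq:BKcondition} becomes $g(w)-w=o(1/|w|)$, then the Nevanlinna--Herglotz representation plus dominated/monotone convergence to force $\beta=1$, $\mu\equiv 0$, $\alpha=0$. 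The computations check out: the identity $C(a)-C(b)=2i(a-b)/\big((1-a)(1-b)\big)$, the replacement of $1-f(\tilde z)$ by $1-\tilde z$ (legitimate because $(f(\tilde z)-\tilde z)/(1-\tilde z)=o(|1-\tilde z|^2)\to 0$), and the limiting arguments identifying $\beta$ and $\mu(\R)$ are all sound. What your approach buys: it is elementary modulo the representation theorem, and since you invoke \eqref{eq:BKcondition} only along the radial path $w=iy$, you in fact prove a slightly stronger radial statement, partway toward Theorem~\ref{th:BaraccoEtAl}. What the paper's machinery buys: the same conclusion with an arbitrary maximal Blaschke product $B$ in place of $\id$, and with the hypothesis imposed only on a non-tangential sequence --- situations your method does not reach, since the Nevanlinna representation compares $g$ to the identity and has no analogue adapted to a general inner function $B$. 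It is worth noting that the two arguments share a kernel: Corollary~\ref{cor:MBPBlaschkeJulia} of the paper also works with the Herglotz-type function $\frac{1+f}{1-f}-A\,\frac{1+B}{1-B}$ of non-negative real part, which for $B=\id$, $A=1$ is exactly (up to conjugating the right half-plane to the upper half-plane) the function you expand. One small slip in your commentary only: your closing claim that ``only the imaginary part of $g$ is used'' is inconsistent with your own final step, since pinning down $\alpha=0$ requires the real part of $g(iy)-iy=\alpha\to 0$; the proof body handles this correctly, so nothing is actually lost.
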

Theorem~\ref{th:BurnsKrantz} pioneered a multitude of ``boundary Schwarz lemmas'', by which we mean rigidity principles involving one or several boundary points. The survey~\cite[Sec.~5]{elinSchwarzLemmaRigidity2014} provides a detailed list of extensions and variations of Theorem~\ref{th:BurnsKrantz} (up to 2014); see also\ \cite{bracciStrongFormAhlforsSchwarz2024,dubininSchwarzInequalityBoundary2004,liuSchwarzLemmaBoundary2016,ossermanSharpSchwarzInequality2000,shoikhetAnotherLookBurnsKrantz2008,tangSchwarzLemmaBoundary2017,taurasoRigidityBoundaryHolomorphic2001,zimmerTwoBoundaryRigidity2022} for further references and more recent work.

In order to place the results of this paper into context, we present two particular ``boundary Schwarz lemmas''. The first one states that the assumption on the local behaviour of~$f$ at~$\xi$ in Theorem~\ref{th:BurnsKrantz} can be weakened.
\begin{theoremliterature}[Baracco-Zaitsev-Zampieri (2006); see~{\cite[Prop.~3.2]{baraccoBurnsKrantzTypeTheorem2006}}]\label{th:BaraccoEtAl}
	Let $f:\D\to\D$ be a holomorphic function and $\xi\in\partial\D$. If there is a sequence $(z_n)\subseteq\D$ such that $z_n\to\xi$ non-tangentially as $n\to\infty$ and
	\begin{equation}
		f(z_n)=z_n+o\big(|\xi-z_n|^3\big),\quad \text{as }n\to\infty,
	\end{equation}
	then $f(z)=z$ for all $z\in\D$.
\end{theoremliterature}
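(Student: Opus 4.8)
The plan is to deduce Theorem~\ref{th:BaraccoEtAl} from the original Burns-Krantz Theorem~\ref{th:BurnsKrantz} by upgrading the hypothesis along a single non-tangential sequence to the full non-tangential (indeed pointwise) asymptotic $f(z)=z+o(|\xi-z|^3)$ required by Theorem~\ref{th:BurnsKrantz}. Without loss of generality I would normalize $\xi=1$ by a rotation. The strategy is to consider the auxiliary holomorphic function
\begin{equation}
g(z):=\frac{z-f(z)}{(1-z)^3}\colon \D\to\C,
\end{equation}
or rather to study the defect $z-f(z)$ and show that, because $f$ maps $\D$ into $\D$, an estimate of the form $z-f(z)=o(|1-z|^3)$ along one non-tangential sequence must in fact propagate. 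The key analytic input will be a Julia-type / Herglotz argument: the function $1-f(z)$ has nonnegative real part behaviour controlled near $\xi$, so one can represent a suitable transform of $f$ by a Herglotz-type integral or use the Julia-Wolff-Carath\'eodory apparatus to control the full angular limit from the sequential hypothesis.

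The heart of the argument, I expect, is the following rigidity-of-coefficients mechanism. From $f:\D\to\D$ and the normalization one first establishes (via the standard boundary Schwarz / Julia-Wolff-Carath\'eodory theory) that the hypothesis $f(z_n)=z_n+o(|1-z_n|^3)$ along a non-tangential sequence forces $f$ to have a finite angular derivative at $\xi=1$ with $\angle\lim_{z\to 1} f(z)=1$ and $\angle\lim_{z\to1} f'(z)=1$. The crucial step is then to show that the \emph{third-order} vanishing, assumed only sequentially, actually holds angularly. First I would form $h(z):=\frac{1+f(z)}{1-f(z)}$, which maps $\D$ into the right half-plane $\{\Re w>0\}$ and thus, minus the analogous Cayley transform $\frac{1+z}{1-z}$ of the identity, yields a function whose real part is sign-definite; a Herglotz representation then converts the pointwise vanishing along $(z_n)$ into control of the representing measure's atom at $\xi$, pinning down the first three Taylor-type data at the boundary.

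The main obstacle, and the step I would budget the most care for, is precisely this passage from a \emph{sequential} non-tangential estimate to a genuine \emph{angular limit}: one cannot naively interpolate between the points $z_n$, so one needs a normal-families or Lindel\"of-type argument. Concretely, I would rescale by Koebe-type automorphisms $\varphi_n$ of $\D$ sending $z_n$ toward the boundary (or toward $0$) and extract a locally uniformly convergent subsequence of the renormalized maps $f\circ\varphi_n$; the sequential hypothesis controls the limit map's $3$-jet at the relevant point, and non-tangential approach guarantees the rescaling factors $\frac{|1-z_n|}{1-|z_n|}$ stay bounded, so the $o(|1-z_n|^3)$ decay survives the limit. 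Once the angular third-order vanishing is in hand, the hypotheses of Theorem~\ref{th:BurnsKrantz} are met (in its angular-limit formulation), and the conclusion $f(z)=z$ follows. I would double-check that the non-tangential cone condition is exactly what makes the ratio $|1-z_n|/(1-|z_n|)$ bounded, since this boundedness is what prevents the cubic error from being inflated under rescaling.
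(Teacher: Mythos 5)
Your proposal assembles the right ingredients but rests its weight on a step that fails. The genuine gap is exactly the passage you flag as the main obstacle: upgrading the sequential estimate $f(z_n)=z_n+o(|1-z_n|^3)$ to an angular one by rescaling with automorphisms and taking a normal-families limit, so as to then quote Theorem~\ref{th:BurnsKrantz}. The hypothesis provides one point value per index $n$; after rescaling it becomes a quantitative smallness of the renormalized map at a single point, and in a locally uniform limit all that survives is the qualitative fact that the limit map fixes a point. Locally uniform convergence annihilates $o(\cdot)$ rates, and point values along a sparse sequence cannot determine the $3$-jet of a limit function, so the assertion that ``the sequential hypothesis controls the limit map's $3$-jet'' is unjustified, and I see no way to repair it within the normal-families framework you describe. (A smaller problem: Theorem~\ref{th:BurnsKrantz} as stated requires the estimate for unrestricted approach $z\to\xi$; the ``angular-limit formulation'' you plan to invoke is itself a strengthening that needs its own proof.) There is also a local non sequitur earlier: two maps into $\{\Re w>0\}$ do not, in general, have a difference with sign-definite real part, so ``$h$ maps into the right half-plane and \emph{thus} $h(z)-\frac{1+z}{1-z}$ is sign-definite'' is wrong as stated. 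What actually produces the sign is Julia's inequality: the hypothesis gives $\frac{1-|f(z_n)|}{1-|z_n|}\to 1$ along the non-tangential sequence, hence $\alpha_f(1)\in(0,1]$ and $f(1)=1$, and then Julia yields $\Re\frac{1+f(z)}{1-f(z)}\geq\frac{1}{\alpha_f(1)}\,\Re\frac{1+z}{1-z}\geq\Re\frac{1+z}{1-z}$ on all of $\D$.

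The irony is that once this Julia step is in place, your Herglotz thread finishes the proof outright, making the flawed upgrade and the appeal to Theorem~\ref{th:BurnsKrantz} unnecessary. Set $g:=\frac{1+f}{1-f}-\frac{1+z}{1-z}$, so $\Re g\geq 0$ on $\D$, and note $g(z_n)=\frac{2(f(z_n)-z_n)}{(1-f(z_n))(1-z_n)}=o(|1-z_n|)$. Writing $g(z)=ic+\int_{\partial\D}\frac{\eta+z}{\eta-z}\,d\mu(\eta)$ with $\mu\geq 0$ finite and $c\in\R$, the trivial Harnack bound $\Re g(z_n)\geq\frac{1-|z_n|^2}{4}\,\mu(\partial\D)$ together with non-tangentiality ($|1-z_n|\leq C(1-|z_n|)$) forces $\mu(\partial\D)=0$; then $g\equiv ic$, and $g(z_n)\to 0$ gives $g\equiv 0$, i.e.\ $f=\id$. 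Note that the \emph{entire} measure dies, not merely an atom at $\xi$ as you suggest; this is essentially the original Baracco--Zaitsev--Zampieri argument. It is also genuinely different from the paper's route, which obtains Theorem~\ref{th:BaraccoEtAl} as the special case $B=\id$ of Theorem~\ref{thm:sequenceBurnsKrantzforMBP}: there the same non-negative-real-part function appears (Corollary~\ref{cor:MBPBlaschkeJulia}, which for $B=\id$ is the classical Julia inequality with $V=\D$ and $A=1$), but the endgame differentiates the Cayley identity, applies Schwarz--Pick to the resulting self-map, and verifies the hypothesis of the Bracci--Kraus--Roth Theorem~\ref{thm:MBPsequenceBurnsKrantzforhypderivatives} to conclude $f=T\circ B$ with $T=\id$, rather than using a Herglotz measure.
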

A second extension of Theorem~\ref{th:BurnsKrantz} is the following result that gives sufficient conditions on a holomorphic function $f:\D\to\D$ in order to coincide with a finite Blaschke product.\footnote{See also~\cite{bolotnikovUniquenessResultBoundary2008} for a generalization of Theorem~\ref{th:Chelst} by using a different approach than in~\cite{chelstGeneralizedSchwarzLemma2001}.}
\begin{theoremliterature}[Chelst (2001); see~{\cite[Th.~2]{chelstGeneralizedSchwarzLemma2001}}]\label{th:Chelst}
	Let $f:\D\to\D$ be a holomorphic function and $B$ a finite Blaschke product of degree~$n$. Let further $\sigma\in\partial\D$ and denote $B^{-1}(\{\sigma\})=\{\xi_1,\dots,\xi_n\}$. If
	\begin{equation}
		\begin{split}
			f(z)&=B(z)+o\big(|\xi_1-z|^3\big),\quad \text{as }z\to\xi_1 
			;\\
			f(z)&=B(z)+o\big(|\xi_k-z|\big), \; \; \quad \text{as }z\to\xi_k 
			,\,\text{ for } k\geq 2,
		\end{split}
	\end{equation}
	then $f(z)=B(z)$ for all $z\in\D$.
\end{theoremliterature}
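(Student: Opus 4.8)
The plan is to reduce Chelst's theorem to the Burns–Krantz theorem (Theorem A) by "dividing out" the Blaschke product $B$. Since $B$ is a finite Blaschke product of degree $n$ with $B^{-1}(\{\sigma\})=\{\xi_1,\dots,\xi_n\}$, each $\xi_k$ is a simple preimage (because $B$ is $n$-to-one with simple boundary derivatives away from critical points; the hypothesis that there are exactly $n$ distinct preimages forces each to be simple). The first step is therefore to record the local behaviour of $B$ at each $\xi_k$: near $\xi_k$ one has $B(z)=\sigma+B'(\xi_k)(z-\xi_k)+O(|z-\xi_k|^2)$ with $B'(\xi_k)\neq 0$, and in fact $\sigma\overline{B'(\xi_k)}\xi_k>0$ by the boundary Schwarz lemma (the boundary derivative of a Blaschke product at a point mapping to the boundary is a positive multiple of $\xi_k/\sigma$).

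\medskip
\noindent\emph{Main construction.} I would consider the composition $g:=M\circ f$, where $M$ is an automorphism of $\D$ sending $\sigma$ to a convenient boundary point, together with a branch of $B^{-1}$ near $\sigma$. More precisely, near the point $\xi_1$ the map $B$ is a biholomorphism onto a neighbourhood of $\sigma$ intersected with $\D$, so there is a holomorphic inverse branch $\psi$ with $\psi(\sigma)=\xi_1$ and $B\circ\psi=\id$. The hypothesis $f(z)=B(z)+o(|\xi_1-z|^3)$ then suggests studying $h:=\psi\circ f$ near $\xi_1$. The idea is that $h$ should satisfy the Burns–Krantz hypothesis $h(z)=z+o(|\xi_1-z|^3)$ at $\xi_1$, which would force $h=\id$, i.e. $B\circ f=\id$ in a neighbourhood, hence $f=B$ by the identity theorem. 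Carrying this through requires showing that composing with the smooth branch $\psi$ preserves the cubic error estimate: since $\psi$ is holomorphic and conformal at $\sigma$ with $\psi'(\sigma)=1/B'(\xi_1)\neq 0$, a Taylor expansion gives $\psi(f(z))-\psi(B(z))=\psi'(\sigma)\,(f(z)-B(z))+O\big(|f(z)-B(z)|\cdot|B(z)-\sigma|\big)$, and because $B(z)-\sigma=O(|z-\xi_1|)$ while $f(z)-B(z)=o(|z-\xi_1|^3)$, one obtains $\psi(f(z))=\psi(B(z))+o(|z-\xi_1|^3)=z+o(|z-\xi_1|^3)$.

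\medskip
\noindent The subtle point is that this direct composition argument only uses the hypothesis at $\xi_1$ and appears to make no use of the weaker first-order hypotheses at $\xi_2,\dots,\xi_n$. That cannot be right for a \emph{sharp} statement, so the weaker conditions at the remaining $\xi_k$ must be doing real work — most plausibly in guaranteeing that $h=\psi\circ f$ is well-defined as a \emph{single-valued} self-map of $\D$ after analytic continuation, or equivalently that $f$ genuinely factors through $B$ globally rather than only near $\xi_1$. Thus I expect the correct route is not a purely local composition but a Julia–Wolff–Carathéodory / Nehari-type argument: one uses a boundary Julia inequality (the local Julia type inequality the abstract promises) to control the angular derivatives of $f$ at \emph{all} of the $\xi_k$ simultaneously, showing $f$ has angular derivative $B'(\xi_k)$ at each $\xi_k$ and that these boundary contacts, combined with the degree count, pin down $f$ as a degree-$n$ Blaschke product. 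The first-order conditions at $\xi_2,\dots,\xi_n$ feed exactly the $n-1$ boundary interpolation data needed, while the cubic condition at $\xi_1$ supplies the rigidity that upgrades "is a Blaschke product agreeing with $B$ to high order" to "equals $B$."

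\medskip
\noindent The main obstacle, I anticipate, is precisely reconciling the local Burns–Krantz rigidity at $\xi_1$ with the global Blaschke structure forced by the first-order data at the other boundary points. A clean approach would be to first prove, via the Julia–Wolff–Carathéodory theorem applied at each $\xi_k$, that $g:=B/f$ (or a suitable quotient) extends to a holomorphic self-map of $\D$ with no boundary singularities and with $g(\xi_k)=1$ together with a boundary-derivative normalisation; the first-order hypotheses at $\xi_2,\dots,\xi_n$ ensure the quotient is bounded and has the right boundary values there, while the cubic hypothesis at $\xi_1$ gives $g(z)=1+o(|\xi_1-z|^3)$, whence Burns–Krantz (Theorem A) applied to a disk-automorphism-normalised version of $g$ yields $g\equiv 1$, i.e. $f=B$. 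Verifying that the quotient $B/f$ is indeed a bounded holomorphic function — which hinges on $|f|$ not decaying faster than $|B|$ near the $\xi_k$, a consequence of the angular-derivative estimates — is the delicate technical heart of the argument.
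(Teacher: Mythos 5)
You should first be aware that the paper does not prove Theorem~\ref{th:Chelst} at all: it is quoted from Chelst's article, and the paper's own results (Theorem~\ref{thm:sequenceBurnsKrantzforMBP}, Corollary~\ref{cor:sequenceBurnsKrantzforMBP}) generalize Burns--Krantz in a different direction, replacing the data at $\xi_2,\dots,\xi_n$ by critical-point constraints. So your attempt has to stand on its own, and it has genuine gaps. The main construction is irreparable: Theorem~\ref{th:BurnsKrantz} is a theorem about self-maps of \emph{all} of $\D$, whereas $h=\psi\circ f$ is defined only on a relative neighbourhood of $\xi_1$, and local Burns--Krantz rigidity is simply false. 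For instance, $H(w)=w-c/w^2$ maps $\{w\in\RH : |w|>R\}$ into $\RH$ whenever $0<c<R^3/\sqrt{2}$ (if $|\Im w|\ge \Re w$ then $\Re(-c/w^2)\ge 0$; otherwise $\Re w> R/\sqrt{2} > c/|w|^2$), and transporting $H$ to $\D$ by the Cayley map produces a holomorphic map $h$ on a relative neighbourhood of $1$, with values in $\D$, satisfying $h(z)=z+O(|1-z|^4)$ but $h\ne\id$. Worse, the conclusion your construction aims at is itself false: writing $\frac{1+B}{1-B}=\sum_k|B'(\xi_k)|^{-1}\frac{\xi_k+z}{\xi_k-z}+i\tau$, one can delete the atom at $\xi_2$, place an atom at a new point $\eta\in\partial\D$ with its mass and the imaginary constant tuned so that the difference of the two Herglotz transforms vanishes to second order at $\xi_1$; the resulting function $f$ is a degree-$n$ Blaschke product with $f\neq B$ and $f(z)=B(z)+O(|\xi_1-z|^4)$. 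Hence no argument that uses only the hypothesis at $\xi_1$ can succeed. Your fallback sketches do not close the gap either: nothing in the hypotheses yields a factorization $f=g\circ B$ (that claim is essentially equivalent to the conclusion); angular limits and angular derivatives at $n$ boundary points form a boundary Nevanlinna--Pick problem with infinitely many solutions, so no ``degree count'' pins $f$ down; and $B/f$ is in general not even holomorphic (it has poles at the zeros of $f$), while your intended endgame fails structurally, because no holomorphic $g:\D\to\D$ can satisfy $g(z)=1+o(|\xi_1-z|)$ --- this would force $\alpha_g(\xi_1)=0$, contradicting the Julia--Wolff--Carath\'eodory theorem.

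The proof you are looking for --- Chelst's, and the global counterpart of the paper's Lemma~\ref{lem:MBPBlaschkeJulia} and Corollary~\ref{cor:MBPBlaschkeJulia} --- runs as follows. Normalize $\sigma=1$ and set $\Phi:=\frac{1+f}{1-f}-\frac{1+B}{1-B}$. The first-order hypotheses $f(z)=B(z)+o(|\xi_k-z|)$, compared radially exactly as in Lemma~\ref{lem:MBPsequenceangularderivative}, give the angular limits $f(\xi_k)=1$ and $\alpha_f(\xi_k)\le\alpha_B(\xi_k)=|B'(\xi_k)|$ for every $k$; hence, by Julia--Carath\'eodory, the Herglotz measure of $\frac{1+f}{1-f}$ carries an atom of mass $1/\alpha_f(\xi_k)\ge |B'(\xi_k)|^{-1}$ at each of the $n$ distinct points $\xi_k$, so it dominates the purely atomic Herglotz measure of $\frac{1+B}{1-B}$. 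Therefore $\Re\Phi\ge0$ on all of $\D$ --- this multi-point Julia inequality is the step your sketch gestures at (``control the angular derivatives at all $\xi_k$ simultaneously'') but never formulates, and it is precisely where the data at $\xi_2,\dots,\xi_n$ are consumed. Finally, $\Phi=\frac{2(f-B)}{(1-f)(1-B)}$, so the cubic hypothesis at $\xi_1$, together with the lower bounds $|1-f(r\xi_1)|\gtrsim 1-r$ and $|1-B(r\xi_1)|\gtrsim 1-r$, gives $\Phi(r\xi_1)=o(1-r)$; for a holomorphic function with nonnegative real part, the Herglotz representation then forces the representing measure and the imaginary constant to vanish, i.e.\ $\Phi\equiv0$ and $f=B$. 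Note the contrast with the paper's setting: there the positivity of $\Re\bigl(\frac{1+f}{1-f}-A\frac{1+B}{1-B}\bigr)$ can only be obtained on a region $V$ near $\xi$, which is why the paper's proof of Theorem~\ref{thm:sequenceBurnsKrantzforMBP} requires the much heavier machinery of Sections~\ref{sec:GFT}--\ref{sec:proof:thm:sequenceBurnsKrantzforMBP}; in Chelst's situation the hypotheses at all $n$ preimages make the inequality global and the argument short.
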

The main result of this paper combines the underlying ideas of Theorems~\ref{th:BaraccoEtAl} and~\ref{th:Chelst} --- namely controlling the local behaviour of~$f$ near a given boundary point only on a single (non-tangential) sequence (Theorem~\ref{th:BaraccoEtAl}) and comparing~$f$ to a more general function than the identity function (Theorem~\ref{th:Chelst}) --- by additionally taking critical points into account. More specifically, our approach involves so-called \emph{maximal} Blaschke products, a type of Blaschke product that is intimately tied to the critical points of holomorphic self-maps of~$\D$. We give a precise definition of maximal Blaschke products below (see Definition~\ref{def:MBP}). At this point we only like to point out that every finite Blaschke product is a maximal Blaschke product. However, the class of maximal Blaschke products is much bigger as it contains certain infinite Blaschke products, too (see Remark~\ref{rem:MBP}\eqref{it:FBPsubsetMBP}).
\begin{theorem}\label{thm:sequenceBurnsKrantzforMBP}
	Let $f:\D\to\D$ be a holomorphic function, $\xi\in\partial\D$ and $B$ a maximal Blaschke product for~$f$. Further assume, that
	\begin{equation}\label{eq:MBPhasfiniteBoundaryDilationCoef}
		\liminf_{z\to\xi}\frac{1-|B(z)|}{1-|z|}\in(0,\infty).
	\end{equation}
	If there is a sequence $(z_n)\subseteq\D$ such that $z_n\to\xi$ non-tangentially as $n\to\infty$ and
	\begin{equation}\label{eq:MBPfequalsBnontangonsequence}
		f(z_n)=B(z_n)+o(|\xi-z_n|^3)\quad \text{as }n\rightarrow\infty\,,
	\end{equation}
	then $f(z)=B(z)$ for all $z\in\D$.
\end{theorem}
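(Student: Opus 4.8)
The plan is to reduce the statement to boundary data at a single fixed point and then to run a localized Julia-type rigidity argument. After a rotation I may assume $\xi=1$. Since $B:\D\to\D$, hypothesis~\eqref{eq:MBPhasfiniteBoundaryDilationCoef} is precisely the finiteness (and non-vanishing) of the boundary dilation coefficient of $B$ at $1$, so the Julia--Wolff--Carath\'eodory theorem applies: the angular limit $\eta:=\anglim_{z\to1}B(z)$ exists and is unimodular, the angular derivative $\beta:=B'(1)\in(0,\infty)$ exists, and $\tfrac{1-B(z)}{1-z}\to\beta$ as $z\to1$ non-tangentially; rotating the target I may assume $\eta=1$. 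I would first transfer this to $f$: dividing~\eqref{eq:MBPfequalsBnontangonsequence} by $1-z_n$ gives $\tfrac{1-f(z_n)}{1-z_n}=\tfrac{1-B(z_n)}{1-z_n}+o(|1-z_n|^2)\to\beta$, and since $1-|f(z_n)|\le|1-f(z_n)|$ while $1-|z_n|\asymp|1-z_n|$ along the non-tangential sequence, this yields $\liminf_{z\to1}\tfrac{1-|f(z)|}{1-|z|}<\infty$. Hence Julia--Wolff--Carath\'eodory applies to $f$ as well; as $f(z_n)\to\eta=1$, the angular limit of $f$ at $1$ is $1$, and its angular derivative $\gamma:=f'(1)$ equals $\beta$ (both arise as the non-tangential limit of $\tfrac{1-f(z)}{1-z}$, which may be computed along $z_n$).

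Next I would pass to the Herglotz picture. With $\mathbb H:=\{w:\Re w>0\}$, put $p:=\tfrac{1+f}{1-f}$ and $q:=\tfrac{1+B}{1-B}$, both mapping $\D$ into $\mathbb H$, so that $\Re p=\tfrac{1-|f|^2}{|1-f|^2}$ and $\Re q=\tfrac{1-|B|^2}{|1-B|^2}$. The core step is a \emph{local Julia type inequality}: from Nehari's sharpening of the Schwarz lemma together with the maximality of $B$ for $f$ --- which furnishes the pointwise domination of hyperbolic derivatives $\tfrac{(1-|z|^2)|f'|}{1-|f|^2}\le\tfrac{(1-|z|^2)|B'|}{1-|B|^2}$ --- I would deduce that $\Re q\ge\Re p$ on some horodisk $E$ internally tangent to $\partial\D$ at $1$; equivalently $G:=q-p$ has non-negative real part on $E$. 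Since $G=\tfrac{2(B-f)}{(1-f)(1-B)}$ and, along $z_n$, $1-f\sim\beta(1-z)$ and $1-B\sim\beta(1-z)$, hypothesis~\eqref{eq:MBPfequalsBnontangonsequence} gives
\[
|G(z_n)|=\frac{2\,|B(z_n)-f(z_n)|}{|1-f(z_n)|\,|1-B(z_n)|}=\frac{o(|1-z_n|^3)}{\beta^2|1-z_n|^2\,(1+o(1))}=o(|1-z_n|).
\]

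Finally I would extract rigidity. A non-tangential sequence eventually lies in every horodisk at $1$, so $z_n\in E$ for large $n$; choosing a M\"obius map $\Psi:\D\to E$ with $\Psi(1)=1$, the function $g:=G\circ\Psi$ has non-negative real part on all of $\D$, and $\Psi$ preserves non-tangentiality and the decay rate, so $g(w_n)=o(|1-w_n|)$ for $w_n:=\Psi^{-1}(z_n)\to1$. Suppose $G\not\equiv0$. Then $g$ is not a purely imaginary constant (excluded by $g(w_n)\to0$), so $\Re g>0$ and $m:=\tfrac{1-g}{1+g}$ maps $\D$ into $\D$ with $m(w_n)\to1$ and $\tfrac{1-m(w_n)}{1-w_n}=\tfrac{2g(w_n)}{(1+g(w_n))(1-w_n)}\to0$. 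Thus the boundary dilation coefficient of $m$ at $1$ vanishes, and Julia's lemma gives $\tfrac{|1-m(z)|^2}{1-|m(z)|^2}\le0$ for all $z\in\D$, forcing $m\equiv1$, i.e.\ $g\equiv0$ --- contradicting $\Re g>0$. Hence $G\equiv0$ on $E$, and by the identity theorem $f\equiv B$ on $\D$.

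The main obstacle is the local Julia type inequality $\Re q\ge\Re p$ near $1$. It is genuinely local --- at interior points the reverse can hold (e.g.\ for $f=\tfrac12z^2$, $B=z^2$ at $z=\tfrac{i}{2}$) --- so no global comparison is available and one cannot form a globally defined self-map from $G$ outright; this is exactly why the horodisk localization above is needed. The inequality must instead be obtained by integrating the hyperbolic-derivative domination supplied by Nehari's sharpening and the maximality of $B$ outward from the common boundary fixed point, crucially exploiting that $f$ and $B$ share the angular derivative $\beta$. A secondary technical point is to guarantee that the possibly infinite Blaschke product $B$ carries the requisite angular-derivative regularity, which is precisely what~\eqref{eq:MBPhasfiniteBoundaryDilationCoef} provides.
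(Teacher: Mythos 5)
Your skeleton matches the paper's architecture up to a point: the transfer of the angular derivative from $B$ to $f$ along $(z_n)$ is Lemma~\ref{lem:MBPsequenceangularderivative}; your function $G$ and the computation $|G(z_n)|=o(|1-z_n|)$ are exactly Corollary~\ref{cor:MBPBlaschkeJulia} (up to a sign: Nehari domination makes $f$ the \emph{more} contracting map, so the correct inequality is $\Re\frac{1+f}{1-f}\geq\Re\frac{1+B}{1-B}$, i.e.\ $\Re(p-q)\geq 0$, not $\Re(q-p)\geq0$; this is harmless since your argument is symmetric under $G\mapsto -G$); and you correctly identify the local Julia inequality as the crux. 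The genuine gap is that this crux is both unproven and asserted on the wrong kind of domain. You posit $\Re(q-p)\geq0$ on a \emph{horodisk} $E$ tangent at $1$, to be obtained ``by integrating the hyperbolic-derivative domination''. But that integration (Lemma~\ref{lem:MBPintegratedNehari}) requires a curve from $z$ to $v$ whose $B$-image is the geodesic $[B(z),B(v)]_h$, hence requires $B$ to be injective on the domain with hyperbolically convex image. Injectivity of $B$ near $\xi$ follows from $\alpha_B(\xi)\in(0,\infty)$ only in the \emph{non-tangential} sense (Beardon--Minda, Lemma~\ref{lem:Stolzendinclusions}); a horodisk contains tangential approach regions, where an infinite maximal Blaschke product with merely a finite angular derivative is not controlled at all and need not be injective. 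Accordingly, what the machinery actually yields is a Stolz-end-type domain $V$ with a corner at $1$ (Corollary~\ref{cor:MBPregionwhereBisinjective}, Lemma~\ref{lem:MBPBlaschkeJulia}); the paper explicitly leaves open how large the domain of validity of \eqref{eq:MBPJulia} can be taken.

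This is not a repairable technicality, because your endgame depends on the horodisk. Your rigidity step works only because the uniformizer $\Psi$ of a horodisk is a M\"obius (indeed affine) map, conformal at $1$, so the decay $o(|1-z_n|)$ transfers to $o(|1-w_n|)$. If $E$ is replaced by the corner domain $V$ that is actually available, the Riemann map $G:\D\to V$ satisfies $1-G(w)\asymp(1-w)^{\mu}$ near $1$, where $\pi\mu<\pi$ is the opening angle of $V$ at $1$; then $G(z_n)=o(|1-z_n|)$ only gives $g(w_n)=o(|1-w_n|^{\mu})$ with $\mu<1$, which is too weak for your Julia-lemma conclusion (you need $o(|1-w_n|)$ to force the boundary dilation coefficient of $m$ to vanish). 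Overcoming exactly this corner loss is what occupies most of the paper's proof, and it requires a different endgame: differentiate the Herglotz identity \eqref{eq:MBPuwithFandG}, show that the Visser--Ostrowski-type quotients $\frac{|1-G^{-1}(z_n)|}{1-|G^{-1}(z_n)|}$ and $\frac{|1-z_n||(G^{-1})'(z_n)|}{|1-G^{-1}(z_n)|}$ are $O(1)$ along the sequence (Steps 3a--3c, via Schwarz reflection and the explicit power map straightening the corner --- only $O(1)$ bounds are needed, which is why the corner is survivable there), conclude the hyperbolic-distortion estimate $\frac{|f'(z_n)|}{|B'(z_n)|}\frac{1-|B(z_n)|^2}{1-|f(z_n)|^2}=1+o(|1-z_n|^2)$, and then invoke the boundary rigidity theorem of Bracci--Kraus--Roth (Theorem~\ref{thm:MBPsequenceBurnsKrantzforhypderivatives}) to get $f=T\circ B$ with $T\in\Aut(\D)$, and finally $T=\id$ from \eqref{eq:MBPfequalsBnontangonsequence}. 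So what is missing from your proposal is not a detail but the paper's two central devices: a proof of the local Julia inequality on the (corner) domain where it can be proved, and an argument that extracts rigidity despite that domain not being conformally equivalent to $\D$ through a map regular at~$1$.
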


Choosing~$B$ to be the identity function in Theorem~\ref{thm:sequenceBurnsKrantzforMBP} recovers Theorem~\ref{th:BaraccoEtAl} --- and hence also implies Theorem~\ref{th:BurnsKrantz}. Further note that the assumption~\eqref{eq:MBPhasfiniteBoundaryDilationCoef} demands~$B$ to have a finite angular derivative (see Section~\ref{sub:AngularDerivative}) at~$\xi$. Since this property is always satisfied for finite Blaschke products which are, as it was mentioned above, maximal Blaschke products, Theorem~\ref{thm:sequenceBurnsKrantzforMBP} has the following immediate consequence.
\begin{corollary}\label{cor:sequenceBurnsKrantzforMBP}
	Let $B$ be a finite Blaschke product and $f:\D\to\D$ be a holomorphic function such that $B$ is a (finite) maximal Blaschke product for~$f$. Further, let $\xi\in\partial\D$. If there is a sequence $(z_n)\subseteq\D$ such that $z_n\to\xi$ non-tangentially as $n\to\infty$ and
	\begin{equation}\label{eq:MBPfequalsBnontangonsequenceCorollary}
		f(z_n)=B(z_n)+o(|\xi-z_n|^3)\quad \text{as }n\rightarrow\infty\,,
	\end{equation}
	then $f(z)=B(z)$ for all $z\in\D$.
\end{corollary}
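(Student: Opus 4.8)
The plan is to obtain Corollary~\ref{cor:sequenceBurnsKrantzforMBP} as a direct specialization of Theorem~\ref{thm:sequenceBurnsKrantzforMBP}: every hypothesis of the theorem is present verbatim except~\eqref{eq:MBPhasfiniteBoundaryDilationCoef}, so the whole task reduces to checking that~\eqref{eq:MBPhasfiniteBoundaryDilationCoef} holds automatically whenever $B$ is a finite Blaschke product. Once this is established, applying Theorem~\ref{thm:sequenceBurnsKrantzforMBP} to $f$, $B$, $\xi$ and the non-tangential sequence $(z_n)$ --- noting that~\eqref{eq:MBPfequalsBnontangonsequenceCorollary} is identical to~\eqref{eq:MBPfequalsBnontangonsequence} --- immediately yields $f\equiv B$ on~$\D$.

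To verify~\eqref{eq:MBPhasfiniteBoundaryDilationCoef}, I would first recall that a finite Blaschke product $B=c\prod_{j=1}^{n}\frac{z-a_j}{1-\cc{a_j}z}$ is a rational function whose poles $1/\cc{a_j}$ all lie outside $\cc\D$. Hence $B$ is holomorphic on a neighbourhood of $\cc\D$, is continuous up to $\partial\D$ with $|B|\equiv 1$ there, and a short computation of its logarithmic derivative gives
\[
\frac{\xi B'(\xi)}{B(\xi)}=\sum_{j=1}^{n}\frac{1-|a_j|^2}{|1-\cc{a_j}\xi|^2}>0,\qquad \xi\in\partial\D,
\]
so that $B'(\xi)\neq 0$. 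In particular $B$ possesses a finite, non-vanishing angular derivative at~$\xi$.

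It then remains to locate the boundary dilation coefficient in~\eqref{eq:MBPhasfiniteBoundaryDilationCoef}. For the lower bound I would invoke the Schwarz--Pick inequality $|B'(z)|(1-|z|^2)\leq 1-|B(z)|^2$, which rearranges to
\[
\frac{1-|B(z)|^2}{1-|z|^2}\geq |B'(z)|,
\]
and $|B'(z)|\to|B'(\xi)|>0$ as $z\to\xi$ by holomorphy across the boundary. Since $\tfrac{1+|z|}{1+|B(z)|}\to 1$ as $z\to\xi$ (using $|B(z)|\to|B(\xi)|=1$ by continuity of $B$ on $\cc\D$), the same positive lower bound transfers to $\tfrac{1-|B(z)|}{1-|z|}$, whence the $\liminf$ in~\eqref{eq:MBPhasfiniteBoundaryDilationCoef} is strictly positive. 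For finiteness I would let $z\to\xi$ radially: a first-order expansion of $B$ at~$\xi$ gives $\tfrac{1-|B(z)|}{1-|z|}\to|B'(\xi)|<\infty$ along this path, so the $\liminf$ is at most $|B'(\xi)|<\infty$. Together these give $\liminf_{z\to\xi}\tfrac{1-|B(z)|}{1-|z|}=|B'(\xi)|\in(0,\infty)$, which is precisely~\eqref{eq:MBPhasfiniteBoundaryDilationCoef}.

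There is no substantial obstacle here: the corollary is genuinely immediate once one records that finite Blaschke products have finite, non-zero angular derivatives at every point of $\partial\D$. The only point deserving care is that~\eqref{eq:MBPhasfiniteBoundaryDilationCoef} is phrased as an \emph{unrestricted} $\liminf$ rather than a non-tangential limit; this is exactly what the Schwarz--Pick lower bound controls, so no delicate estimate over tangential approach regions is required.
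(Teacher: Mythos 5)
Your proposal is correct and follows exactly the paper's route: the corollary is obtained by specializing Theorem~\ref{thm:sequenceBurnsKrantzforMBP}, after observing that a finite Blaschke product --- being holomorphic across $\partial\D$ with non-vanishing derivative there --- automatically satisfies~\eqref{eq:MBPhasfiniteBoundaryDilationCoef} at every boundary point. The paper simply asserts this classical fact and calls the corollary an immediate consequence, whereas you verify it explicitly (logarithmic derivative, Schwarz--Pick lower bound, radial upper bound); your verification is sound.
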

We will see in Section~\ref{sec:MBP} that~$B$ being a MBP for~$f$ implies that every critical point of~$B$ is a critical point of~$f$, too. Therefore, since a finite Blaschke product possesses degree~$B$ minus one many critical points, Corollary~\ref{cor:sequenceBurnsKrantzforMBP} puts degree of~$B$ many constraints on~$f$ w.r.t.~$B$. The same number of conditions relates~$f$ to~$B$ in Theorem~\ref{th:Chelst}. This way, one can view Corollary~\ref{cor:sequenceBurnsKrantzforMBP} as an analogue to Theorem~\ref{th:Chelst}.

The major work in proving Theorem~\ref{thm:sequenceBurnsKrantzforMBP} consists in establishing the conditions of the following recently proven ``boundary Schwarz lemma''.
\begin{theoremliterature}[Bracci-Kraus-Roth (2023), see~{\cite[Th.~2.10]{bracciNewSchwarzPickLemma2023}}]\label{thm:MBPsequenceBurnsKrantzforhypderivatives}
	Let $f:\D\to\D$ be a holomorphic function, $\xi\in\partial\D$ and $B$ a maximal Blaschke product for~$f$. If there is a sequence $(z_n)\subseteq\D$ such that $z_n\to\xi$ non-tangentially and
	\begin{equation}\label{eq:HypDerivcondition}
		\frac{|f'(z_n)|}{|B'(z_n)|}\frac{1-|B(z_n)|^2}{1-|f(z_n)|^2}=1+o\big(|\xi-z_n|^2\big)\quad \text{as }n\to\infty,
	\end{equation}
	then $f=T\circ B$ for some $T\in\Aut(\D)$, i.e.\ some conformal automorphism $T:\D\to\D$.
\end{theoremliterature}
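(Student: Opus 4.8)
The plan is to recast the hypothesis as a statement about the ratio of pulled-back hyperbolic metrics and then to run a boundary rigidity argument for the associated Gauss curvature equation. First I would abbreviate the left-hand side of~\eqref{eq:HypDerivcondition} as
\[
\rho(z):=\frac{|f'(z)|\,(1-|B(z)|^2)}{|B'(z)|\,(1-|f(z)|^2)}=\frac{|f'(z)|/(1-|f(z)|^2)}{|B'(z)|/(1-|B(z)|^2)},
\]
i.e.\ the quotient of the hyperbolic derivatives of $f$ and $B$ at $z$ (the pullbacks of the hyperbolic density $(1-|w|^2)^{-1}$). The essential structural input is the extremal property of maximal Blaschke products (cf.\ Section~\ref{sec:MBP}): since $B$ is a maximal Blaschke product for $f$, its critical divisor is dominated by that of $f$, and the Ahlfors–Schwarz comparison with the maximal conformal metric yields the global domination $\rho\le 1$ on $\D$, together with the rigidity statement that $\rho\equiv 1$ holds if and only if $f=T\circ B$ for some $T\in\Aut(\D)$. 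Thus $\rho\le 1$ and the hypothesis $\rho(z_n)=1+o(|\xi-z_n|^2)$ reduce the entire theorem to the single implication: if $\rho$ attains its supremum value $1$ along $(z_n)$ fast enough, then $\rho\equiv 1$.

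Next I would set $u:=\log\rho\le 0$ and exploit the Gauss curvature equation. Both $\tfrac{|f'|}{1-|f|^2}$ and $\tfrac{|B'|}{1-|B|^2}$ are pullbacks of the hyperbolic metric, hence of curvature $-4$ away from the critical points, so that $\Delta\log\tfrac{|f'|}{1-|f|^2}=4\big(\tfrac{|f'|}{1-|f|^2}\big)^2$ and likewise for $B$; subtracting gives
\[
\Delta u = 4\Big(\frac{|B'|}{1-|B|^2}\Big)^2\big(e^{2u}-1\big)+2\pi\sum_{c\in\operatorname{crit}(f)\setminus\operatorname{crit}(B)}(m^f_c-m^B_c)\,\delta_c,
\]
where the point masses are nonnegative because $\operatorname{crit}(B)\subseteq\operatorname{crit}(f)$. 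Since $u\le 0$, the curvature term is $\le 0$, so $u$ is superharmonic, and the hypothesis becomes $u(z_n)=o(|\xi-z_n|^2)$. The target is therefore a boundary uniqueness principle for this semilinear equation: a nonpositive solution that decays faster than $|\xi-z|^2$ along a non-tangential sequence must vanish identically.

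This boundary uniqueness is the heart of the matter and the step I expect to be the main obstacle. Bare superharmonicity does \emph{not} suffice: the function $v:=-u\ge 0$ is subharmonic off the masses, and a nonnegative subharmonic function may vanish to arbitrarily high order at a boundary point (e.g.\ $|\xi-z|^4$), so the semilinear coupling must be used. Near $\xi$ one has $e^{2u}-1\sim 2u$, whence $v$ obeys a linear differential inequality $\Delta v \ge \kappa(z)\,v$ with a boundary-singular potential $\kappa(z)$ of order $(1-|z|^2)^{-2}$; a scaling/indicial analysis of this equation in the boundary distance $t=1-|z|$ singles out the quadratic order as the critical decay rate, matching the exponent in~\eqref{eq:HypDerivcondition} (and consistent, via the passage from $f$ to the derivative ratio $\rho$, with the sharp cubic threshold of Burns–Krantz). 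Concretely I would construct a barrier adapted to a horocycle at $\xi$, bound $v$ from below by a constant multiple of $|\xi-z|^2$ inside a Stolz region unless $v\equiv 0$, and then contradict $v(z_n)=o(|\xi-z_n|^2)$.

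The two genuine technical difficulties I anticipate are: (i) controlling the boundary-singular coefficient $\big(|B'|/(1-|B|^2)\big)^2$ without any a~priori finite-angular-derivative hypothesis on $B$ (this normalization is the delicate point, since the critical decay exponent is governed by the boundary behaviour of $B^h$); and (ii) accommodating the critical masses of $f$, which need not be finite in number and may accumulate at $\xi$, so that a naive maximum principle is unavailable and one must argue through a careful comparison/barrier estimate. Once $\rho\equiv 1$ is established, the rigidity characterization of maximal Blaschke products recalled above immediately gives $f=T\circ B$ for some $T\in\Aut(\D)$, completing the proof.
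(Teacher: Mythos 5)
First, a point of order: the paper itself does not prove this statement --- it is quoted as a literature result from \cite[Th.~2.10]{bracciNewSchwarzPickLemma2023} and used as a black box in Section~\ref{sec:proof:thm:sequenceBurnsKrantzforMBP} --- so there is no internal proof to compare against, and your attempt must be judged as a self-contained proof of the cited theorem. Your opening reductions are correct: by Definition~\ref{def:MBP} the ratio $\rho$ of hyperbolic derivatives satisfies $\rho\le 1$ on $\D$, by the equality statement of Theorem~\ref{th:NehariSchwarz} (together with uniqueness of MBPs) equality at a single non-critical point already forces $f=T\circ B$, and passing to $u=\log\rho$ and the Gauss curvature (Liouville) equation is indeed the framework of the cited source. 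The problem is that everything after this reduction is missing. The statement you reduce to --- a nonpositive solution $u$ of the curvature equation with $u(z_n)=o(|\xi-z_n|^2)$ along a single non-tangential sequence must vanish identically --- is precisely the analytic core of the Bracci--Kraus--Roth theorem (their boundary Ahlfors--Schwarz lemma). You describe a barrier strategy and an indicial computation identifying the quadratic exponent as critical, but you neither construct the barrier nor prove the claimed lower bound $v\gtrsim|\xi-z|^2$ in a Stolz region; you explicitly defer the two difficulties on which the whole matter turns. A proposal whose central lemma is announced but not established has a genuine gap, and here the gap coincides with essentially the entire content of the theorem.

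Concretely, three steps as written would fail or need repair. (a) The sign bookkeeping is off: the point masses $2\pi(m^f_c-m^B_c)\delta_c$ enter $\Delta u$ with a \emph{plus} sign, so $u$ is not superharmonic across the extra critical points of $f$ (where $u\to-\infty$); superharmonicity holds only off this discrete set. (b) The linearized inequality $\Delta v\ge\kappa\,v$ for $v=-u$ rests on $1-e^{-2v}\gtrsim v$, which is false wherever $v$ is large --- in particular near every critical point of $f$ that is not a critical point of $B$, and these may accumulate at $\xi$. (c) The potential $\kappa=4\bigl(|B'|/(1-|B|^2)\bigr)^2$ is only bounded \emph{above} by $4/(1-|z|^2)^2$ via Schwarz--Pick; your barrier needs a matching \emph{lower} bound of the same order near $\xi$ to make the quadratic exponent critical, i.e.\ a nondegeneracy of $B$ at $\xi$, and this theorem assumes nothing of the sort --- the finite-angular-derivative hypothesis \eqref{eq:MBPhasfiniteBoundaryDilationCoef} belongs to Theorem~\ref{thm:sequenceBurnsKrantzforMBP}, not to the present statement. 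You flag (b) and (c) yourself, but flagging an obstacle is not the same as overcoming it; as it stands, the proposal reduces the theorem to an unproved boundary uniqueness principle that is at least as hard as the theorem itself.
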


In comparison with the results discussed above, Theorem~\ref{thm:MBPsequenceBurnsKrantzforhypderivatives} can be understood as a different type of ``boundary Schwarz lemma'' in the following sense: instead of a local condition on the behaviour of~$f$, Theorem~\ref{thm:MBPsequenceBurnsKrantzforhypderivatives} imposes a local condition on the behaviour of the so-called hyperbolic distortion $|f'|/(1-|f|^2)$ of~$f$. This way, Theorem~\ref{th:BurnsKrantz} is not directly contained in Theorem~\ref{thm:MBPsequenceBurnsKrantzforhypderivatives} but instead can be obtained in a two-tier fashion: First, one can show that the assumption~\eqref{eq:BKcondition} of Theorem~\ref{th:BurnsKrantz} implies~\eqref{eq:HypDerivcondition} for the function $B(z)=z$. Then, using~\eqref{eq:BKcondition} again combined with Theorem~\ref{thm:MBPsequenceBurnsKrantzforhypderivatives} establishes Theorem~\ref{th:BurnsKrantz} (see \cite[Prop.~8.1]{bracciNewSchwarzPickLemma2023} or \cite[Th.~2.7.4]{abateHolomorphicDynamicsHyperbolic2022} for more details).

In~\cite[Prob.~5.1]{bracciStrongFormAhlforsSchwarz2024} the following question was posed: ``\emph{Does this strengthened version of the Burns–Krantz theorem [Theorem~\ref{th:BaraccoEtAl}] also follow from the boundary Ahlfors–Schwarz lemma for the unit disk [Theorem~\ref{thm:MBPsequenceBurnsKrantzforhypderivatives}]?}'' Since Theorem~\ref{th:BaraccoEtAl} is a special case of Theorem~\ref{thm:sequenceBurnsKrantzforMBP} and our proof utilizes Theorem~\ref{thm:MBPsequenceBurnsKrantzforhypderivatives}, the present work gives, in particular, an affirmative answer to that question.

\smallskip

This paper is organized as follows: First, in Section~\ref{sec:MBP} we discuss maximal Blaschke products. In Section~\ref{sec:GFT} we collect some prerequisites from geometric function theory: Section~\ref{sub:HypGeometry} introduces basic notions about hyperbolic geometry on~$\D$, Section~\ref{sub:AngularDerivative} deals with angular derivatives of holomorphic self-maps of~$\D$, and in Section~\ref{sub:BeardonMinda} we determine certain sets in~$\D$ where existence of a finite angular derivative guarantees injectivity. The ideas in Section~\ref{sub:BeardonMinda} are based on recent work by Beardon and Minda~\cite{beardonGeometricJuliaWolff2023}. Next, in Section~\ref{sec:JuliaInequality} we prove a local Julia type inequality (Lemma~\ref{lem:MBPBlaschkeJulia}). This inequality is one of the crucial ingredients used in the proof of Theorem~\ref{thm:sequenceBurnsKrantzforMBP} because it allows us to use the knowledge of the relative behaviour of the functions~$f$ and~$B$ given on one sequence, i.e.~\eqref{eq:MBPfequalsBnontangonsequence}, in order to obtain information about their relation on a comparably bigger set of points. Finally, in Section~\ref{sec:proof:thm:sequenceBurnsKrantzforMBP} we give the proof of Theorem~\ref{thm:sequenceBurnsKrantzforMBP}.

\section{Maximal Blaschke products (MBP)}\label{sec:MBP}

Recall that $z\in\D$ is a critical point (of multiplicity $m$) of a holomorphic function $f:\D\to\D$ if and only if~$z$ is a zero (of multiplicity~$m$) of~$f'$. Moreover, we denote the collection of critical points of~$f$ counting multiplicities by~$\mathcal{C}_f$. We are interested in the following sharpening of the Schwarz --- or more precisely of the Schwarz-Pick --- lemma.
\begin{theoremliterature}[Kraus (2013); see~{\cite[Cor.~1.5]{krausCriticalSetsBounded2013}}. Kraus-Roth (2013); see~{\cite[Th.~1.1]{krausMaximalBlaschkeProducts2013}}]\label{th:NehariSchwarz}
		Let $f:\D\to\D$ be a holomorphic function and $\mathcal{C}$ a subcollection of~$\mathcal{C}_f$. Then there exists a Blaschke product~$B$ such that $\mathcal{C}_B=\mathcal{C}$ and
		\begin{equation}\label{eq:NehariSchwarz}
			\frac{|f'(z)|}{1-|f(z)|^2} \leq \frac{|B'(z)|}{1-|B(z)|^2}
			\qquad \text{ for all }z\in\D
		\end{equation}
		with equality for one --- and hence every --- $z\in\D\setminus\mathcal{C}$ if and only if $f=T\circ B$ for some ${T\in\Aut(\D)}$.
\end{theoremliterature}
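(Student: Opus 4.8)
The statement packages two claims: the \emph{existence} of a Blaschke product $B$ with critical set exactly $\mathcal C$, and the \emph{extremal inequality} \eqref{eq:NehariSchwarz} together with its rigidity. My guiding principle would be to interpret $\lambda_g(z):=\tfrac{|g'(z)|}{1-|g(z)|^2}$ as the density of the pull-back under $g$ of the hyperbolic metric $\rho=|dw|/(1-|w|^2)$ of curvature $-4$. Away from $\mathcal C_g$ this pull-back again has curvature $-4$, so $\Delta\log\lambda_g=4\lambda_g^2$, while at each critical point $\log\lambda_g$ picks up a logarithmic singularity; thus, as distributions, $\Delta\log\lambda_g=4\lambda_g^2+2\pi\sum_{c\in\mathcal C_g}m_c\,\delta_c$. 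In this language the asserted inequality reads $\lambda_f\le\lambda_B$, i.e.\ it is an Ahlfors--Schwarz comparison of two conformal metrics of curvature $-4$ whose singular sets satisfy $\mathcal C=\mathcal C_B\subseteq\mathcal C_f$. The easy (``if'') half of the rigidity is then immediate: if $f=T\circ B$ with $T\in\Aut(\D)$, the identity $\tfrac{|T'(w)|}{1-|T(w)|^2}=\tfrac1{1-|w|^2}$ for automorphisms gives $\lambda_f\equiv\lambda_B$, hence equality everywhere.

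I would establish \emph{existence} of $B$ first for a finite sub-collection $\mathcal C=\{c_1,\dots,c_n\}$, where one may invoke the classical fact that there is a finite Blaschke product of degree $n+1$ with critical set exactly $\mathcal C$, unique up to post-composition with an automorphism. For a general (infinite) $\mathcal C$ I would exhaust it by finite sub-collections $\mathcal C_1\subseteq\mathcal C_2\subseteq\cdots$ with $\bigcup_N\mathcal C_N=\mathcal C$, take the associated finite Blaschke products $B_N$ (normalised, say, by $B_N(0)=0$ and $B_N'(0)>0$), and extract a locally uniform limit $B$ by normal families. The decisive and genuinely hard point is to verify that this limit is non-degenerate and is an honest Blaschke product whose critical set is exactly $\mathcal C$ --- neither losing critical points nor acquiring new ones in the limit; this is precisely where the hypothesis $\mathcal C\subseteq\mathcal C_f$ enters (it guarantees that $\mathcal C$ is realisable as a critical set at all) and where the solvability theory for metrics of constant negative curvature with prescribed singularities does the real work. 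I regard this construction as the main obstacle; the remaining two assertions are comparatively soft.

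Granting a finite $B$ with $\mathcal C_B=\mathcal C\subseteq\mathcal C_f$, the inequality follows by a maximum-principle argument. Set $v:=\log\lambda_f-\log\lambda_B$; off $\mathcal C_f$ both densities solve the curvature equation, so $\Delta v=4(\lambda_f^2-\lambda_B^2)=4\lambda_B^2\big(e^{2v}-1\big)$, while $v\to-\infty$ at those critical points where $f$ vanishes to higher order than $B$. For a finite $B$ one has $|B|\to1$ and $B'\ne0$ near $\partial\D$, whence $\lambda_B(z)\sim\rho(z)$ as $|z|\to1$; combined with Schwarz--Pick ($\lambda_f\le\rho$) this yields $\limsup_{z\to\partial\D}v\le0$. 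Were $v$ to have an interior maximum with $v>0$, then $\Delta v\le0$ there would force $e^{2v}\le1$, a contradiction; hence $v\le0$, i.e.\ $\lambda_f\le\lambda_B$. The infinite case then follows by passing to the limit, since $\lambda_f\le\lambda_{B_N}$ for every $N$ and $\lambda_{B_N}\to\lambda_B$. Finally, for the ``only if'' direction suppose $v(z_0)=0$ for some $z_0\in\D\setminus\mathcal C$; then $z_0\notin\mathcal C_f$, and near $z_0$ the equation rewrites as the linear elliptic equation $\Delta v=c(z)\,v$ with $c(z)=4\lambda_B^2\,(e^{2v}-1)/v\ge0$. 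Since $v\le0$ attains the interior maximum value $0$ at $z_0$, the strong maximum principle gives $v\equiv0$ near $z_0$, hence on all of $\D$ by real-analyticity. Thus $\lambda_f\equiv\lambda_B$, meaning the two pull-back metrics coincide; consequently $f\circ B^{-1}$ is, locally, a hyperbolic isometry, which globalises to an automorphism $T\in\Aut(\D)$ with $f=T\circ B$.
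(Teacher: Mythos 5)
A preliminary remark: the paper does not prove this statement at all --- it is Theorem~\ref{th:NehariSchwarz}, quoted verbatim from \cite{krausCriticalSetsBounded2013} and \cite{krausMaximalBlaschkeProducts2013} --- so your proposal can only be judged against those sources. Your general framework is the right one and matches the literature: reading $\lambda_g:=|g'|/(1-|g|^2)$ as the density of the pulled-back hyperbolic metric, proving the inequality by an Ahlfors--Schwarz/Heins maximum-principle comparison, and getting rigidity from the strong maximum principle together with a Liouville-type uniqueness for the developing map. In particular your treatment of the case of \emph{finite} $\mathcal{C}$ is essentially complete (modulo the classical Heins/Zakeri existence-and-uniqueness theorem for finite Blaschke products with prescribed critical points, which is legitimate to cite), and this recovers Nehari's 1947 theorem, cf.\ Remark~\ref{rem:MBP}.

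The genuine gap is the one you flag yourself, and it is not a removable technicality: it is the actual content of the cited theorems. Your normal-families construction does produce --- thanks to the normalization $B_N(0)=0$, which rules out degeneration to a unimodular constant, and Hurwitz's theorem, which applies because $\mathcal{C}\subseteq\mathcal{C}_f$ is discrete when $f$ is non-constant --- a holomorphic self-map $B$ of $\D$ with $\mathcal{C}_B=\mathcal{C}$ and $\lambda_f\le\lambda_B$. What it does not produce is the assertion that $B$ is a \emph{Blaschke product}: a locally uniform limit of finite Blaschke products can be an arbitrary self-map, and proving that this particular limit is inner, and moreover carries no singular inner factor, is exactly where Kraus and Kraus--Roth need the maximal conformal pseudometric with prescribed zero set, the solvability and uniqueness theory for the Gauss curvature equation, the identification of critical sets of bounded analytic functions with zero sets of the weighted Bergman space $\mathcal{A}_1^2$, and a separate indestructibility argument. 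Appealing to ``solvability theory \dots does the real work'' is therefore a citation of the hard part, not a proof of it; as written, you establish the weaker statement in which ``Blaschke product'' is replaced by ``holomorphic self-map of $\D$''. Two smaller repairs: (i) your interior-maximum argument breaks down if the maximum of $v=\log\lambda_f-\log\lambda_B$ is attained at a point of $\mathcal{C}$ where $f$ and $B$ have equal multiplicity, since there $\lambda_f=\lambda_B=0$ makes the inequality $\lambda_B^2\bigl(e^{2v}-1\bigr)\le 0$ vacuous; this is fixed by noting that $\Delta v\ge 4\bigl(\lambda_f^2-\lambda_B^2\bigr)$ holds distributionally on all of $\D$ (the delta masses enter with the favorable sign precisely because $\mathcal{C}\subseteq\mathcal{C}_f$ with multiplicities), so $v$ is subharmonic on the open set $\{v>0\}$ and the ordinary maximum principle applies there; (ii) the final step, that a map satisfying $T^*\rho=\rho$ on an open subset of $\D$ ``globalises'' to an element of $\Aut(\D)$, deserves an argument (monodromy/analytic continuation of a local hyperbolic isometry, or Liouville's representation theorem), since Schwarz--Pick equality at a point only applies to maps defined on all of $\D$.
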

\begin{definition}[Maximal Blaschke products]\label{def:MBP}
	Let $f:\D\to\D$ be a holomorphic function. A Blaschke product~$B$ is called a \emph{maximal Blaschke product (MBP) for~$f$} if~$\mathcal{C}_B$ is a subcollection of~$\mathcal{C}_f$ and~$B$ satisfies
	\begin{equation}
		\frac{|g'(z)|}{1-|g(z)|^2} \leq \frac{|B'(z)|}{1-|B(z)|^2}
		\qquad \text{ for all }z\in\D
	\end{equation}
	for every holomorphic function $g:\D\to\D$ such that~$\mathcal{C}_B$ is also a subcollection of~$\mathcal{C}_g$.
\end{definition}
\begin{remark}\label{rem:MBP}
	\begin{enumerate}[(a)]
		\item Every MBP~$B$ is \emph{indestructible}, i.e.\ if $T\in\Aut(\D)$, then $T\circ B$ is a MBP, too.
		\item Every MBP~$B$ is uniquely determined by~$\mathcal{C}_B$ up to postcomposing with some $T\in\Aut(\D)$.
		\item If $\mathcal{C}=\emptyset$, then Theorem~\ref{th:NehariSchwarz} recovers the (infinitesimal version of the) classical Schwarz-Pick lemma.
		\item The case that~$\mathcal{C}$ is a finite set in Theorem~\ref{th:NehariSchwarz} has first been proven by Nehari in 1947; see~\cite{nehariGeneralizationSchwarzLemma1947}.
		\item \label{it:FBPsubsetMBP} Every finite Blaschke product is a MBP. In fact, a MBP is a finite Blaschke product if and only if~$\mathcal{C}_B$ is finite; see~\cite[Rem.~1.2(b)]{krausCriticalSetsBounded2013}.
		\item For more on MBP we refer the reader to~\cite{bracciNewSchwarzPickLemma2023,ivriiPrescribingInnerParts2019,ivriiCriticalStructuresInner2021,ivriiAnalyticMappingsUnit2024,krausCriticalSetsBounded2013,krausCriticalPointsGauss2013,krausMaximalBlaschkeProducts2013}.
	\end{enumerate}
\end{remark}

\section{Hyperbolic geometry, angular derivative and Stolz regions}\label{sec:GFT}

\subsection{Some facts from hyperbolic geometry}\label{sub:HypGeometry}
We denote by
\begin{equation}\label{eq:HyperbolicMetric}
	\dD(z,w):=2\tanh^{-1}\left\vert\frac{z-w}{1-\cc{z}w}\right\vert
\end{equation}
the \emph{hyperbolic distance} between two points $z,w\in\D$. Further, we write $[z,w]_h$ for the \emph{geodesic line segment} (w.r.t.~$\dD$) joining $z,w\in\D$ and $(\xi,\sigma)_h$ for the infinite \emph{geodesic line} (w.r.t.~$\dD$) with ``end points'' $\xi, \sigma\in\partial\D$.  The \emph{hyperbolic length} of a curve $\gamma:I\to\D$ is defined by
\begin{equation}
	\ell_h(\gamma):=\int_{\gamma}\frac{|dt|}{1-|t|^2}.
\end{equation}
Note that (the infinitesimal version of) the classical Schwarz-Pick lemma implies $\ell_h(g\circ \gamma)\leq \ell_h(\gamma)$ for every holomorphic function $g:\D\to\D$ and equality holds for~$\gamma$ non-constant if and only if $g\in\Aut(\D)$. Using the hyperbolic length, the hyperbolic distance between two points $z,w\in\D$ can be expressed by
\begin{equation}\label{eq:HypMetricAsIntegralLength}
	\dD(z,w)=\ell_h([z,w]_h).
\end{equation}
Moreover, every curve $\gamma$ in~$\D$ connecting $z$ and $w$ satisfies $\dD(z,w)\leq\ell_h(\gamma)$. For the proofs and further information about hyperbolic geometry we refer  the reader to e.g.\ the monographs~\cite{abateHolomorphicDynamicsHyperbolic2022,beardonHyperbolicMetricGeometric2007,bracciContinuousSemigroupsHolomorphic2020}.

\subsection{Angular derivative}\label{sub:AngularDerivative}
Let $\xi\in\partial \D$ and $m>0$. We define the \emph{hyperbolic Stolz region} of width $2m$ anchored at $\xi$ to be the set
	\[S(m,\xi):=\{z\in\D\,:\, \dD(z,(\xi,-\xi)_h)<m\}.\]
We call a sequence $(z_n)\subseteq\D$ \emph{converging non-tangentially} to $\xi$, if $z_n\to\xi$ and there is $m>0$ such that $z_n\in S(m,\xi)$ eventually. In the following let $g:\D\to\D$ be a holomorphic function. We say that~$g$ has \emph{non-tangential (or angular) limit} $\sigma\in\cc{\D}$ at~$\xi$, if
\[\sigma=\lim_{\substack{S(m,\xi) \ni z\to\xi\\ m>0}}g(z)=:\angle\lim_{z\to\xi}g(z).\]
In this case, we write $g(\xi)=\sigma$. Further, we introduce the \emph{boundary dilation coefficient}~$\alpha_g(\xi)$ of~$g$ at~$\xi$ defined by
\[\alpha_g(\xi):=\liminf_{z\to\xi}\frac{1-|g(z)|}{1-|z|}.\]
We use the short-hand notation $\alpha_g:=\alpha_g(1)$ for the boundary dilation coefficient at~1. By the Julia-Wolff-Carath\'{e}odory theorem (see e.g.~\cite[Ch.~4]{shapiroCompositionOperatorsClassical1993}), $\alpha_g(\xi)\in(0,\infty)$ guarantees the existence of a finite non-zero \emph{angular derivative} of~$g$ at~$\xi$ (and vice versa), that is
\[g'(\xi):=\angle\lim_{z\to\xi}g'(z)=\angle\lim_{z\to\xi}\frac{g(\xi)-g(z)}{\xi-z}\in\C\setminus\{0\}.\]
In this case, the angular limit~$g(\xi)\in\partial\D$ always exists and $g'(\xi)=\alpha_g(\xi)g(\xi)\cc{\xi}$. Further note that if $\alpha_g(\xi)\in(0,\infty)$, then the limit inferior in the definition can be replaced by the angular limit, i.e.\ the limit taken along any non-tangential sequence converging to~$\xi$ (see e.g.~\cite[Prop.~1.7.4]{bracciContinuousSemigroupsHolomorphic2020}). For the proof of Theorem~\ref{thm:sequenceBurnsKrantzforMBP} we need the following elementary result, and we include its proof for the sake of completeness.
\begin{lemma}\label{lem:MBPsequenceangularderivative}
	Let $f,g:\D\to\D$ be holomorphic functions and $\xi\in\partial\D$ such that $\alpha_g(\xi)\in(0,\infty)$. If there is a sequence $(z_n)\subseteq\D$ such that $z_n\to\xi$ non-tangentially as $n\to\infty$ and
	\begin{equation}\label{eq:MBPsequencefequalsgangularderiv}
		f(z_n)=g(z_n)+O(|\xi-z_n|)\quad \text{as }n\rightarrow\infty\,,
	\end{equation}
	then $\alpha_f(\xi)\in(0,\infty)$ and the angular limits $f(\xi)$ and $g(\xi)$ coincide. In particular, $f$ has a finite angular derivative $f'(\xi)$ at $\xi$. If
	\begin{equation}\label{eq:MBPsequencefequalsgangularderiv2}
		f(z_n)=g(z_n)+o(|\xi-z_n|)\quad \text{as }n\rightarrow\infty\,,
	\end{equation}
	then the angular derivatives $f'(\xi)$ and $g'(\xi)$ coincide.
\end{lemma}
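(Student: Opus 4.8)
The plan is to reduce everything to the standard Julia--Wolff--Carath\'eodory (JWC) machinery by exploiting the fact that, because $\alpha_g(\xi)\in(0,\infty)$, the function $g$ is very well controlled along the sequence $(z_n)$, and then transfer this control to $f$ using the hypothesis~\eqref{eq:MBPsequencefequalsgangularderiv}. The key quantitative tool is the simple estimate that for $z,w\in\D$ one has
\[
\dD(z,w)\ge \log\frac{1}{1-\min(|z|,|w|)} + \text{(bounded terms)},
\]
together with the elementary inequality relating $1-|z|$ to $|\xi-z|$ inside a Stolz region: if $z\in S(m,\xi)$, then $1-|z|\asymp|\xi-z|$ (with constants depending only on $m$). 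This last comparison is what makes the $O(|\xi-z_n|)$ hypothesis interact correctly with boundary dilation coefficients.

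**First** I would establish the claim $\alpha_f(\xi)\in(0,\infty)$. Since $\alpha_g(\xi)\in(0,\infty)$, the JWC theorem gives $g(\xi)\in\partial\D$ and the comparison $1-|g(z_n)|\asymp 1-|z_n|$ along any non-tangential sequence (because the $\liminf$ defining $\alpha_g(\xi)$ may be replaced by the non-tangential limit, as noted in the excerpt). From~\eqref{eq:MBPsequencefequalsgangularderiv} and the triangle inequality, $|f(z_n)-g(z_n)|\le C|\xi-z_n|\asymp C(1-|z_n|)$, so
\[
1-|f(z_n)| \le 1-|g(z_n)| + |f(z_n)-g(z_n)| \le C'(1-|z_n|).
\]
Dividing by $1-|z_n|$ and taking a $\liminf$ shows $\alpha_f(\xi)\le C'<\infty$. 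The lower bound $\alpha_f(\xi)>0$ is automatic from the Schwarz--Pick lemma (or the general fact that $\alpha_h(\xi)>0$ for every self-map $h$, since $\alpha_h(\xi)=0$ would force $h$ to extend past the boundary contradicting $h(\D)\subseteq\D$; more precisely, one always has $\alpha_h(\xi)\in(0,\infty]$). Hence $\alpha_f(\xi)\in(0,\infty)$, and the JWC theorem yields the finite nonzero angular derivative $f'(\xi)$ and the existence of the angular limit $f(\xi)\in\partial\D$.

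**Next**, to see that $f(\xi)=g(\xi)$, I would pass to the limit directly: since $z_n\to\xi$ non-tangentially and both $f,g$ have angular limits at $\xi$, we have $f(z_n)\to f(\xi)$ and $g(z_n)\to g(\xi)$; the hypothesis gives $|f(z_n)-g(z_n)|\to 0$, so $f(\xi)=g(\xi)$. **Finally**, for the sharper conclusion $f'(\xi)=g'(\xi)$ under~\eqref{eq:MBPsequencefequalsgangularderiv2}, I would write both angular derivatives as the non-tangential limits of difference quotients anchored at the common boundary value $\tau:=f(\xi)=g(\xi)$:
\[
f'(\xi)=\lim_{n\to\infty}\frac{\tau-f(z_n)}{\xi-z_n},\qquad
g'(\xi)=\lim_{n\to\infty}\frac{\tau-g(z_n)}{\xi-z_n},
\]
both limits existing because the respective angular derivatives exist and $(z_n)$ is non-tangential. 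Subtracting,
\[
f'(\xi)-g'(\xi)=\lim_{n\to\infty}\frac{g(z_n)-f(z_n)}{\xi-z_n}
=\lim_{n\to\infty}\frac{o(|\xi-z_n|)}{\xi-z_n}=0,
\]
which gives $f'(\xi)=g'(\xi)$.

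**The main obstacle** is ensuring that the JWC difference-quotient representation of the angular derivative is legitimately evaluated along this \emph{particular} sequence $(z_n)$, rather than merely existing as an abstract non-tangential limit. This is precisely guaranteed by the fact, quoted in the excerpt, that once the boundary dilation coefficient is finite the $\liminf$ can be replaced by the full non-tangential (angular) limit; thus any non-tangential sequence, including $(z_n)$, computes the same value. The only mild care needed is the comparison $1-|z_n|\asymp|\xi-z_n|$ on the Stolz region, which keeps the error estimates $O(|\xi-z_n|)$ and $o(|\xi-z_n|)$ consistent when translated between the metric and Euclidean pictures.
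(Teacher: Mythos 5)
Your proof is correct, and its first half is essentially the paper's own argument: both use the Stolz-region comparability $|\xi-z_n|\asymp 1-|z_n|$ to turn hypothesis \eqref{eq:MBPsequencefequalsgangularderiv} into boundedness of $(1-|f(z_n)|)/(1-|z_n|)$ along the sequence, conclude $\alpha_f(\xi)<\infty$ (positivity being automatic from the dichotomy $\alpha_f(\xi)\in(0,\infty]$), and get $f(\xi)=g(\xi)$ by letting $n\to\infty$ in the hypothesis. Where you genuinely diverge is the final step: the paper upgrades the $O(1)$ to $o(1)$ in the quotient identity, uses that the $\liminf$ defining a finite boundary dilation coefficient equals the angular limit along any non-tangential sequence to conclude $\alpha_f(\xi)=\alpha_g(\xi)$, and then finishes with the algebraic identity $f'(\xi)=\alpha_f(\xi)f(\xi)\cc{\xi}$; you instead invoke the Julia--Wolff--Carath\'eodory difference-quotient representation $f'(\xi)=\anglim_{z\to\xi}\bigl(f(\xi)-f(z)\bigr)/(\xi-z)$ (quoted in Section~\ref{sub:AngularDerivative}), evaluate it along $(z_n)$ for both $f$ and $g$ anchored at the common value $\tau=f(\xi)=g(\xi)$, and subtract. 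Both routes rest on the same JWC package; yours is arguably more direct (it compares the derivatives themselves, bypassing the moduli $\alpha_f,\alpha_g$ entirely), while the paper's reuses the very display it had already written down for the first half, so it comes out one line shorter. Two cosmetic blemishes: the parenthetical justification you offer for $\alpha_f(\xi)>0$ (``$\alpha_h(\xi)=0$ would force $h$ to extend past the boundary'') is not an actual argument --- the correct reason is the Schwarz--Pick/Julia lemma bound you also cite --- and the hyperbolic-distance estimate announced in your opening plan is never used and should be deleted.
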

\begin{proof}
	The assumptions  $(z_n)$ being a non-tangential sequence and \eqref{eq:MBPsequencefequalsgangularderiv} guarantee
	\begin{equation}\label{eq:MBPsequenceO1}
		\frac{1-|f(z_n)|}{1-|z_n|}=\frac{1-|g(z_n)|+O(|1-z_n|)}{1-|z_n|}=\frac{1-|g(z_n)|}{1-|z_n|}+O(1)
	\end{equation}
	as $n\to\infty$. Since  $\alpha_g(\xi)\in(0,\infty)$ it follows $\alpha_f(\xi)\in(0,\infty)$. This proves the existence of the angular derivative $f'(\xi)$ --- and hence the angular limit $f(\xi)$ --- of~$f$ at~$\xi$. Moreover, \eqref{eq:MBPsequencefequalsgangularderiv} ensures $f(\xi)=g(\xi)$.

	If we assume \eqref{eq:MBPsequencefequalsgangularderiv2}, then $O(1)$ can be replaced by $o(1)$ in \eqref{eq:MBPsequenceO1}. This shows $\alpha_f(\xi)=\alpha_g(\xi)$. Hence,
	\begin{equation}
		f'(\xi)=\alpha_f(\xi)f(\xi)\cc{\xi}=\alpha_g(\xi)g(\xi)\cc{\xi}=g'(\xi).\qedhere
	\end{equation}
\end{proof}

\subsection{Injectivity on (ends of) Stolz regions}\label{sub:BeardonMinda}
For the proof of Theorem~\ref{thm:sequenceBurnsKrantzforMBP} (see Section~\ref{sec:proof:thm:sequenceBurnsKrantzforMBP} below) we will exploit the fact that a holomorphic self-map of the open unit disk with finite angular derivative at some boundary point is injective near that boundary point in a non-tangential sense. In order to make this precise, we introduce the following object.
\begin{definition}[End of Stolz region]
 	Let $\xi\in\partial \D$ and $m>0$. For $M>0$ we define the \emph{$M$-th end of $S(m,\xi)$} to be the set
 	\[E(m,\xi,M):=S(m,\xi)\cap H(\xi,M)\]
 	where $H(\xi,M)$ is the horocycle at $\xi$ of radius $1/M$, i.e.\ the set $\{z\in\D\,:\, M|\xi-z|^2<1-|z|^2\}$.
 \end{definition}
Note that every end of a Stolz region $E(m,\xi,M)$ is the intersection of two hyperbolically convex sets and therefore also hyperbolically convex (meaning that for any two points $z,w\in E(m,\xi,M)$ also $[z,w]_h\subseteq E(m,\xi,M)$).

The next lemma is a collection of results obtained in the recent work~\cite{beardonGeometricJuliaWolff2023} of Beardon and Minda (note that they work in the half-plane setting). Since some parts of the statement are slight modifications or only contained in the proofs of~\cite[Sec.~9-10]{beardonGeometricJuliaWolff2023}, we include the general ideas of how to establish Lemma~\ref{lem:Stolzendinclusions}.
 \begin{lemma}\label{lem:Stolzendinclusions}
 	Let $g:\D\to\D$ be a holomorphic function and $\xi\in\partial\D$ such that $\alpha_g(\xi)\in(0,\infty)$ and $g(\xi)=\sigma$. Then for every Stolz region $S(m,\xi)$ there exists $M>0$ such that $g$ is injective on $E(m,\xi,M)$. Moreover:
 	\begin{enumerate}[(i)]
 		\item \label{it:Stolzendinclusions-EpsAndM} For every $\eps>0$ the constant $M$ can be chosen such that
 		\begin{equation}\label{eq:StolzinImageinStolz}
 			E\left(m-\eps,\sigma,\frac{e^\eps M}{\alpha_g(\xi)}\right)\subseteq g\big(E(m,\xi,M)\big)\subseteq E\left(m+\eps,\sigma,\frac{M}{\alpha_g(\xi)}\right).
 		\end{equation}
 		In particular, if $\eps$ is fixed, then~\eqref{eq:StolzinImageinStolz} also holds for $m$ and $M$ replaced by $m'$ and $M'$ such that $m'\leq m$ and $M'\geq M$.

 		\item \label{it:Stolzendinclusions-InclusionChain}The constant $M$ can be chosen such that
 		\begin{multline}
 			g\big(E(m/2,\xi,4M)\big)
 			\subseteq E\left(\frac{5m}{8},\sigma,\frac{4M}{\alpha_g(\xi)}\right)
 			\subseteq
 			g\big(E(3m/4,\xi,2M)\big)\\
 			\subseteq E\left(\frac{7m}{8},\sigma,\frac{2M}{\alpha_g(\xi)}\right)
 			\subseteq
 			g\big(E\left(m,\xi,M\right)\big).
 		\end{multline}
 	\end{enumerate}
 \end{lemma}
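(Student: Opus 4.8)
The plan is to push everything through a Cayley transform to the upper half-plane $\mathbb H$, where the regions in question become elementary. Fix Möbius maps $\phi,\psi:\D\to\mathbb H$ with $\phi(\xi)=\psi(\sigma)=\infty$ and set $G:=\psi\circ g\circ\phi^{-1}:\mathbb H\to\mathbb H$. Möbius maps carry the radial geodesic $(\xi,-\xi)_h$ to the positive imaginary axis, the hypercycle boundaries of the Stolz region $S(m,\xi)$ to Euclidean rays from $0$, and the horocycle $H(\xi,M)$ to a horizontal half-plane $\{\Im w>c(M)\}$; hence an end $E(m,\xi,M)$ is carried \emph{exactly} onto a convex truncated sector $K=\{w:|\arg w-\tfrac{\pi}{2}|<\theta(m),\ \Im w>c(M)\}$, with $\theta(m)<\tfrac{\pi}{2}$ because $m<\infty$. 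The hypothesis $\alpha_g(\xi)\in(0,\infty)$ becomes the half-plane Julia--Wolff--Carathéodory statement $\angle\lim_{w\to\infty}G(w)/w=\angle\lim_{w\to\infty}G'(w)=\lambda$ for some $\lambda\in(0,\infty)$, so deep inside any sector about the imaginary axis $G$ is close to the real scaling $w\mapsto\lambda w$.

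First I would establish injectivity. Since $G'(w)\to\lambda$ non-tangentially, $\sup_{K}|G'-\lambda|\to0$ as $c(M)\to\infty$; choosing $M$ large enough that this supremum is $<\lambda$, convexity of $K$ lets me write, for $w_1\ne w_2\in K$, \[G(w_1)-G(w_2)=\int_{[w_2,w_1]}G'(\zeta)\,d\zeta,\] whence $|G(w_1)-G(w_2)|\ge(\lambda-\sup_K|G'-\lambda|)\,|w_1-w_2|>0$. Transporting back through $\phi,\psi$ yields injectivity of $g$ on $E(m,\xi,M)$.

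Next come the inclusions of part~(i). The horocyclic factor is exactly Julia's inequality $\tfrac{|\sigma-g(z)|^2}{1-|g(z)|^2}\le\alpha_g(\xi)\tfrac{|\xi-z|^2}{1-|z|^2}$, which reads $g(H(\xi,M))\subseteq H(\sigma,M/\alpha_g(\xi))$ and gives the horocycle part of the right-hand inclusion. For the Stolz-width factor I would use pure contraction: by Schwarz--Pick $\dD(g(z),g((\xi,-\xi)_h))\le\dD(z,(\xi,-\xi)_h)<m$, and since $\lambda$ is real positive the image geodesic $g((\xi,-\xi)_h)$ is asymptotic to $(\sigma,-\sigma)_h$, so it stays within hyperbolic distance $\eps$ of $(\sigma,-\sigma)_h$ once $M$ is large; the triangle inequality then places $g(z)$ in $S(m+\eps,\sigma)$, proving the right-hand inclusion. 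For the left-hand inclusion I would argue by covering: $g$ is injective, hence an open homeomorphism onto its image, so it suffices to show that $g(\partial E(m,\xi,M))$ misses the smaller end $E(m-\eps,\sigma,e^\eps M/\alpha_g(\xi))$ while $g(E(m,\xi,M))$ meets it; connectedness of the smaller end then forces it into the image.

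The hard part is the quantitative input needed for that last step: lower bounds showing that deep in the end $g$ moves the Stolz boundary at least $m-\eps$ away from $(\sigma,-\sigma)_h$ and fills horocycles up to the factor $e^\eps$ (the reversed, asymptotically sharp form of Julia's inequality). Both rest on the non-tangential convergence $\tfrac{|g'(z)|(1-|z|^2)}{1-|g(z)|^2}\to1$, which makes $g$ an almost-isometry on the hyperbolically convex end; converting this infinitesimal saturation into the stated uniform inclusions is the delicate geometric estimate borrowed from Beardon--Minda, and is where I expect the real work to lie. Finally, part~(ii) is bookkeeping on top of~(i): taking $\eps\le\min(m/8,\ln2)$ and invoking the monotonicity clause of~(i) (it persists under $m'\le m$, $M'\ge M$), each ``forward'' inclusion $g(E(\cdots))\subseteq E(\cdots)$ is the right-hand inclusion of~(i) with $\eps=m/8$ and each ``backward'' inclusion $E(\cdots)\subseteq g(E(\cdots))$ is the left-hand inclusion with $\eps$ small; choosing $M$ above the finitely many thresholds makes all four hold at once.
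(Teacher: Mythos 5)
Your proposal is correct and takes essentially the same route as the paper: both pass to a half-plane where the ends $E(m,\xi,M)$ become truncated sectors, use the Julia--Wolff--Carath\'eodory asymptotics $G(w)\approx\lambda w$ at infinity together with Beardon--Minda's quantitative estimates for the sharp two-sided inclusions, and deduce part~(ii) by applying part~(i) three times with $\eps<\min\{m/8,\log 2\}$ plus the monotonicity clause. The only difference is one of packaging: the paper cites Beardon--Minda's Corollary~2 and Lemma~5 wholesale, whereas you reconstruct the injectivity and right-hand inclusion arguments yourself and defer only the reverse-Julia estimate to their work.
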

 \begin{proof}
 	Without loss of generality we assume $\xi=\sigma=1$. We switch to the right half-plane $\RH:=\{w\in\C \, : \, \Re w > 0\}$: Denote $C:\D\to\RH$, $z\mapsto (z+1)/(1-z)$. We set $F:=C\circ g\circ C^{-1}:\RH\to\RH$ and $\dRH(w,u):=\dD(C^{-1}(w),C^{-1}(u))$ for $w,u\in\RH$. Then $\alpha_g\in(0,\infty)$ implies
 	\[W:=\inf_{w\in\RH}\frac{\Re F(w)}{\Re w}=\frac{1}{\alpha_g}\in(0,\infty).\]
 	Now we apply \cite[Cor.~2]{beardonGeometricJuliaWolff2023} which states that there is $M>0$ such that $F$ is injective on
 	\[C\big(E(m,1,M)\big)=\{w\in\RH\,:\, \dRH\big(w,(0,\infty)\big)<m\}\cap\{w\in\RH\,:\, \Re w>M\}.\]
 	Here, $(0,\infty)$ denotes the positive real axis which is the hyperbolic geodesic line with ``end points'' $0,\infty\in\partial\RH$. Switching back to $\D$ shows that $g$ is injective on $E(m,1,M)$.

 	Part~\eqref{it:Stolzendinclusions-EpsAndM} also follows from the corresponding statement on $\RH$ which is~\cite[Lem.~5]{beardonGeometricJuliaWolff2023}. Note that the statement of~\cite[Lem.~5]{beardonGeometricJuliaWolff2023} only contains the inclusions~\eqref{eq:StolzinImageinStolz} for fixed $\eps>0$. However, the additional claims that~$g$ is injective on~$E(m,\xi,M)$ and that $m$ and $M$ can be replaced by smaller resp.\ larger constants $m'$ and $M'$ are already implicitly contained: the proof of~\cite[Lem.~5]{beardonGeometricJuliaWolff2023} determines the constant $M>0$ such that
 	\begin{align}
 		\text{(a) }& g \text{ is injective on } E(m,1,M);\\
 		\text{(b) }& \dRH(g(z),Wz)<\eps \text{ for all } z\in E(m,1,M);\\
 		\text{(c) }& \dRH(g(z),Wz)<\eps \text{ for all } z\in \partial E(m,1,M)\cap\partial S(m,1).
 	\end{align}
 	Property~(a) shows our injectivity claim. Further, since $E(m',1,M')\subseteq E(m,1,M)$ if $m'\leq m$ and $M'\geq M$, the properties (a), (b) and (c) also hold with $m$ replaced by $m'$ and $M$ replaced by $M'$. This shows our second additional claim.

 	In order to obtain Part~\eqref{it:Stolzendinclusions-InclusionChain} we can apply Part~\eqref{it:Stolzendinclusions-EpsAndM} thrice for $\eps<\min\{m/8,\log2\}$; this follows an idea in~\cite[Proof of Th.~8]{beardonGeometricJuliaWolff2023}.
 \end{proof}

\begin{corollary}\label{cor:MBPregionwhereBisinjective}
	Let $g:\D\to\D$ be a holomorphic function and $\xi\in\partial\D$ such that $\alpha_g(\xi)\in(0,\infty)$ and $g(\xi)=\sigma$. Then there is a simply connected domain $V\subseteq \D$ with $\xi\in\partial V $ such that $g$ is injective on $V$ and such that~$g(V)$ is hyperbolically convex.

	In fact, for every $m>0$ there exists $M>0$ such that we can choose $V$ with $g(V)=E(m,\sigma,M)$ and \begin{multline}\label{eq:InclusionsForV}
		E(4m/7,\xi,2M\alpha_g(\xi))
		\subseteq
		g^{-1}\big(E\left(5m/7,\sigma,2M\right)\big)\cap V
		\subseteq
		E(6m/7,\xi,M\alpha_g(\xi))\\
		\subseteq V
		 \subseteq E(8m/7,\xi,M\alpha_g(\xi)/2).
	\end{multline}
	In particular, in this case, $g$ is injective on~$\cc{V}\cap\D$.
\end{corollary}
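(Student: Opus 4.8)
Write $\alpha:=\alpha_g(\xi)$ for brevity. The plan is to realise $V$ as the preimage, under the injective branch of $g$ at $\xi$, of the hyperbolically convex target end $E(m,\sigma,M)$, so that hyperbolic convexity of $g(V)$ comes for free. Concretely, I would first apply Lemma~\ref{lem:Stolzendinclusions} with Stolz width $2m$ to fix an end $E(2m,\xi,M_0)$ on which $g$ is injective, $M_0$ being the constant furnished by the lemma; by the monotonicity clause in part~\eqref{it:Stolzendinclusions-EpsAndM} I may then shrink to any end with larger horocyclic parameter, so I am free to pick the eventual $M$ as large as convenient. Fix once and for all $\eps:=\min\{m/7,\log 2\}$, choose $M$ so large that $M\alpha/4\ge M_0$ (hence $g$ is injective on $E(2m,\xi,M\alpha/4)$, the region that will contain everything below), and set $V:=g^{-1}\!\big(E(m,\sigma,M)\big)\cap E(2m,\xi,M\alpha/4)$.

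Since $g$ is injective on $E(2m,\xi,M\alpha/4)$, its restriction there is a homeomorphism onto its image, so $V$ is open and, as soon as $E(m,\sigma,M)\subseteq g\big(E(2m,\xi,M\alpha/4)\big)$, it is carried homeomorphically onto $E(m,\sigma,M)$ and hence inherits connectedness and simple connectivity from that end, which is hyperbolically convex (being $S(m,\sigma)\cap H(\sigma,M)$) and thus simply connected. The required surjectivity is exactly the left-hand inclusion of \eqref{eq:StolzinImageinStolz}: applied to the source $E(8m/7,\xi,M\alpha/2)$ it gives $E(8m/7-\eps,\sigma,e^{\eps}M/2)\subseteq g\big(E(8m/7,\xi,M\alpha/2)\big)$, and because $\eps\le m/7$ and $e^{\eps}\le 2$ the target $E(m,\sigma,M)$ lies inside the left-hand end. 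Hence $g(V)=E(m,\sigma,M)$, which is hyperbolically convex as an intersection of two hyperbolically convex sets.

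The inclusion chain \eqref{eq:InclusionsForV} I would then obtain by reading off \eqref{eq:StolzinImageinStolz} for this same $\eps$ on each of the ends $E(4m/7,\xi,2M\alpha)$, $E(6m/7,\xi,M\alpha)$, $E(8m/7,\xi,M\alpha/2)$ and converting image inclusions into preimage inclusions by injectivity. For forward inclusions such as $E(6m/7,\xi,M\alpha)\subseteq V$, the right-hand (upper) inclusion yields $g\big(E(6m/7,\xi,M\alpha)\big)\subseteq E(6m/7+\eps,\sigma,M)\subseteq E(m,\sigma,M)$, so the whole source already maps into the target. For reverse inclusions such as $V\subseteq E(8m/7,\xi,M\alpha/2)$, the left-hand (lower) inclusion shows $E(m,\sigma,M)\subseteq g\big(E(8m/7,\xi,M\alpha/2)\big)$, so each $z\in V$ has a preimage of $g(z)$ inside $E(8m/7,\xi,M\alpha/2)$, which injectivity forces to be $z$ itself. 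The two inner inclusions, placing $g^{-1}\!\big(E(5m/7,\sigma,2M)\big)\cap V$, are handled identically, the widths being governed by $\eps\le m/7$ and the factors $2$ in $2M\alpha$, $2M$ and $M\alpha/2$ all coming from $e^{\eps}\le 2$. Finally $\xi\in\partial V$ because the points $r\xi$ ($r\to1^-$), which lie on the geodesic $(\xi,-\xi)_h$, eventually belong to $E(6m/7,\xi,M\alpha)\subseteq V$, so $\xi\in\cc{V}\setminus V$.

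It then remains to record injectivity on $\cc{V}\cap\D$: since $V\subseteq E(8m/7,\xi,M\alpha/2)$ and the closure of this end in $\D$ still lies inside the strictly larger injectivity end $E(2m,\xi,M\alpha/4)$ — the Stolz width $8m/7<2m$ and the horocyclic parameter $M\alpha/2>M\alpha/4$ each leaving room to absorb the boundary — the map $g$ is injective on $\cc{V}\cap\D$ as well. I expect the main obstacle to be purely the constant bookkeeping: pinning down $\eps=\min\{m/7,\log 2\}$ and $M$ so that the width increments ($\le m/7$) and the horocyclic factors ($e^{\eps}\le 2$) reproduce the exact fractions and factors of $2$ appearing in \eqref{eq:InclusionsForV}, together with the small but essential point of taking the injectivity end with a strictly smaller horocyclic parameter than the one bounding $V$, so that injectivity survives passage to the closure.
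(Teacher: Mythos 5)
Your proposal is correct and is essentially the paper's own argument: the paper's one-line proof takes $V:=g^{-1}\big(E(m,\sigma,2\tilde M/\alpha_g(\xi))\big)\cap E(8m/7,\xi,\tilde M)$ with $\tilde M$ given by Lemma~\ref{lem:Stolzendinclusions}\eqref{it:Stolzendinclusions-InclusionChain} applied to $\tilde m=8m/7$, and that part of the lemma is in turn proved by exactly the three applications of part~\eqref{it:Stolzendinclusions-EpsAndM} with $\eps<\min\{\tilde m/8,\log 2\}=\min\{m/7,\log 2\}$ that you carry out by hand. Your only deviation --- defining $V$ inside the wider injectivity end $E(2m,\xi,M\alpha_g(\xi)/4)$ and recovering $V\subseteq E(8m/7,\xi,M\alpha_g(\xi)/2)$ a posteriori via injectivity --- is sound and has the small advantage of making the final claim, injectivity of $g$ on $\cc{V}\cap\D$, explicit where the paper's terse proof leaves it implicit.
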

\begin{proof}
	Choose $V:=g^{-1}(E\left(m,\sigma,2\tilde{M}/\alpha_g(\xi)\right))\cap E(8m/7,\xi,\tilde{M})$ where $\tilde{M}$ is the constant obtained from Lemma~\ref{lem:Stolzendinclusions}\eqref{it:Stolzendinclusions-InclusionChain} applied to $\tilde{m}=8m/7$.
\end{proof}

\section{A local Julia type inequality}\label{sec:JuliaInequality}

One key ingredient that we need for the proof of Theorem~\ref{thm:sequenceBurnsKrantzforMBP} is the following local Julia type inequality comparing $f$ and $B$.
\begin{lemma}\label{lem:MBPBlaschkeJulia}
	Let $f:\D\to\D$ be a holomorphic function and $B$ a maximal Blaschke product for~$f$. Further assume, that $\alpha_B\in(0,\infty)$ and $B(1)=1$. If there is a sequence $(z_n)\subseteq\D$ such that $z_n\to1$ non-tangentially as $n\to\infty$ and
	\begin{equation}\label{eq:MBPfequalsBnontangonsequenceintegratedNehari}
		f(z_n)=B(z_n)+O(|1-z_n|)\quad \text{as }n\rightarrow\infty\,,
	\end{equation}
	then
	\begin{equation}\label{eq:JuliaCoefficient}
		A:=\lim_{n\to\infty}\frac{1-|f(z_n)|}{1-|B(z_n)|}\in(0,\infty)\,,
	\end{equation}
	and there is a simply connected domain $V\subseteq\D$ with $1\in\partial V$ s.t.\
	\begin{equation}\label{eq:MBPJulia}
		\frac{|1-f(v)|^2}{1-|f(v)|^2}\leq A\frac{|1-B(v)|^2}{1-|B(v)|^2}\qquad\text{for all }v\in V.
	\end{equation}
	Moreover, if
	\begin{equation}\label{eq:MBPfequalsBnontangonsequenceintegratedNeharistrong}
		f(z_n)=B(z_n)+o(|1-z_n|)\quad \text{as }n\rightarrow\infty,
	\end{equation}
	then $A=1$.
\end{lemma}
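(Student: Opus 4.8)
The plan is to reduce the desired comparison inequality to an ordinary, but localized, Julia inequality for the composition $h:=f\circ(B|_V)^{-1}$, where $V$ is the injectivity domain furnished by Corollary~\ref{cor:MBPregionwhereBisinjective}.

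First I would dispose of the assertions about the constant $A$. Applying Lemma~\ref{lem:MBPsequenceangularderivative} with $g=B$ (recall $\alpha_B\in(0,\infty)$) to the hypothesis~\eqref{eq:MBPfequalsBnontangonsequenceintegratedNehari} yields $\alpha_f(1)\in(0,\infty)$ together with $f(1)=B(1)=1$, so that $f$ has a finite nonzero angular derivative at~$1$. Since $\alpha_f(1)$ and $\alpha_B(1)$ are both finite and positive, the defining liminf may be taken as an honest limit along any nontangential sequence; writing $\frac{1-|f(z_n)|}{1-|B(z_n)|}=\frac{1-|f(z_n)|}{1-|z_n|}\cdot\frac{1-|z_n|}{1-|B(z_n)|}$ then gives $A=\alpha_f(1)/\alpha_B(1)\in(0,\infty)$, which is~\eqref{eq:JuliaCoefficient}. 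For the last assertion, the stronger hypothesis~\eqref{eq:MBPfequalsBnontangonsequenceintegratedNeharistrong} lets me invoke the second part of Lemma~\ref{lem:MBPsequenceangularderivative} to obtain $f'(1)=B'(1)$, whence $\alpha_f(1)=\alpha_B(1)$ and $A=1$.

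Next I would build the domain and the comparison map. Applying Corollary~\ref{cor:MBPregionwhereBisinjective} to $g=B$ produces a simply connected $V$ with $1\in\partial V$ on which $B$ is injective and such that $B(V)$ is a hyperbolically convex end of a Stolz region; choosing the width parameter large enough I can moreover arrange that $z_n\in V$ eventually. Since $B|_V$ is injective, $B'$ does not vanish on $V$, so $h:=f\circ(B|_V)^{-1}\colon B(V)\to\D$ is a well-defined holomorphic map with $f=h\circ B$ on $V$. Because $B$ is a maximal Blaschke product for~$f$, Theorem~\ref{th:NehariSchwarz} gives $|f'|/(1-|f|^2)\le|B'|/(1-|B|^2)$ on $\D$; inserting $f=h\circ B$, applying the chain rule, and cancelling the nonvanishing factor $|B'|$ on $V$ turns this into the infinitesimal Schwarz--Pick inequality
\[
\frac{|h'(w)|}{1-|h(w)|^2}\le\frac{1}{1-|w|^2}\qquad(w\in B(V)),
\]
i.e.\ $h$ does not increase the hyperbolic length $\ell_h$ of curves in $B(V)$. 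As $B(V)$ is hyperbolically convex, integrating along geodesics $[w,w']_h\subseteq B(V)$ upgrades this to $\dD(h(w),h(w'))\le\dD(w,w')$ for all $w,w'\in B(V)$.

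Finally I would run the limiting horocycle argument to obtain the localized Julia inequality for $h$ at the boundary point~$1$. Setting $w_n:=B(z_n)\in B(V)$, I have $w_n\to1$, $h(w_n)=f(z_n)\to1$, and $\frac{1-|h(w_n)|}{1-|w_n|}=\frac{1-|f(z_n)|}{1-|B(z_n)|}\to A$. Writing the contraction between a fixed $w\in B(V)$ and $w_n$ in the squared pseudohyperbolic form, with $1-\rho(a,b)^2=\frac{(1-|a|^2)(1-|b|^2)}{|1-\cc{b}a|^2}$, dividing by $1-|w_n|^2$, and letting $n\to\infty$ yields $A\,\frac{1-|h(w)|^2}{|1-h(w)|^2}\ge\frac{1-|w|^2}{|1-w|^2}$, that is $\frac{|1-h(w)|^2}{1-|h(w)|^2}\le A\frac{|1-w|^2}{1-|w|^2}$ on $B(V)$; substituting $w=B(v)$ and $h(w)=f(v)$ for $v\in V$ gives exactly~\eqref{eq:MBPJulia}. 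I expect the main obstacle to be precisely this localization: the classical Julia lemma presumes a self-map of all of $\D$, whereas $h$ lives only on the subdomain $B(V)$. The hyperbolic convexity of $B(V)$ is what rescues the argument, since it is exactly the property that allows the infinitesimal estimate to be integrated into the global contraction $\dD(h(w),h(w'))\le\dD(w,w')$ between the points $w$ and $w_n$ needed for the horocyclic limit. A secondary technical point is ensuring $z_n\in V$ eventually, which is handled by taking the Stolz width in Corollary~\ref{cor:MBPregionwhereBisinjective} large relative to the aperture of the given nontangential sequence.
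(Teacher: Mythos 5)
Your proposal is correct and is essentially the paper's own argument in different packaging: your infinitesimal Schwarz--Pick inequality for $h=f\circ(B|_V)^{-1}$, integrated along geodesics of the hyperbolically convex set $B(V)$, is precisely the paper's Lemma~\ref{lem:MBPintegratedNehari}, there phrased as $\dD(f(z),f(v))\le\dD(B(z),B(v))$ on $V$ and proved by integrating Nehari's inequality (Theorem~\ref{th:NehariSchwarz}) along the $B$-preimage of a geodesic. The remaining steps --- identifying $A$ via Lemma~\ref{lem:MBPsequenceangularderivative}, choosing $V$ by Corollary~\ref{cor:MBPregionwhereBisinjective} so that $z_n\in V$ eventually, and passing to the limit in the squared pseudohyperbolic form of the contraction with $z=z_n$ --- coincide with the paper's proof.
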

\begin{remark}
	\begin{enumerate}[(a)]
		\item The case $B(z)=z$ in Lemma~\ref{lem:MBPBlaschkeJulia} recovers the classical Julia inequality (see e.g.~\cite[p.~63]{shapiroCompositionOperatorsClassical1993}) since in this case one can take $V=\D$.
		\item In general, one cannot take $V=\D$. For example, if $f(z)=z^3$, $B(z)=z^2$, the inequality~\eqref{eq:MBPJulia} fails for points near $-1$. Moreover, the proof of Lemma~\ref{lem:MBPBlaschkeJulia} does not uniquely determine the set~$V$. Given~$f$ and ~$B$ it is an interesting question to determine the largest possible set of points such that~\eqref{eq:MBPJulia} holds.
	\end{enumerate}
\end{remark}
In the proof of our main result, Theorem~\ref{thm:sequenceBurnsKrantzforMBP}, we make particular use of the following consequence of Lemma~\ref{lem:MBPBlaschkeJulia}.
\begin{corollary}\label{cor:MBPBlaschkeJulia}
	Under the assumptions and notations of Lemma \ref{lem:MBPBlaschkeJulia} the holomorphic function
	\begin{equation}\label{eq:MBPrealpartofharmonicfunction}
		\D\ni z\mapsto\frac{1+f(z)}{1-f(z)}-A\,\frac{1+B(z)}{1-B(z)}
	\end{equation}
	has non-negative real part on $V$. Moreover, if
	\begin{equation}\label{eq:MBPfequalsBstrongCorBlaschkeJulia}
		f(z_n)=B(z_n)+o(|1-z_n|^3)\quad \text{as }n\rightarrow\infty\,,
	\end{equation}
	then
	\begin{equation}\label{eq:holfunctionissmall}
		\frac{1+f(z_n)}{1-f(z_n)}-\frac{1+B(z_n)}{1-B(z_n)}=o(|1-z_n|)\quad \text{as }n\rightarrow\infty\,.
	\end{equation}
\end{corollary}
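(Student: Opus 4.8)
The guiding idea is to pass through the Cayley transform $w\mapsto\frac{1+w}{1-w}$ from $\D$ onto the right half-plane $\RH$, which has real part
\[
\Re\frac{1+w}{1-w}=\frac{1-|w|^2}{|1-w|^2},\qquad w\in\D.
\]
For the first assertion I would apply this identity to $w=f(z)$ and $w=B(z)$, so that the real part of the holomorphic function in \eqref{eq:MBPrealpartofharmonicfunction} becomes
\[
\Re\!\left[\frac{1+f(z)}{1-f(z)}-A\,\frac{1+B(z)}{1-B(z)}\right]=\frac{1-|f(z)|^2}{|1-f(z)|^2}-A\,\frac{1-|B(z)|^2}{|1-B(z)|^2}\qquad(z\in V).
\]
Since $f$ and $B$ take values in $\D$, the quantities $1-|f|^2$, $1-|B|^2$, $|1-f|^2$, $|1-B|^2$ are strictly positive on $V$, so the sign of this difference is governed entirely by the Julia-type estimate \eqref{eq:MBPJulia} of Lemma~\ref{lem:MBPBlaschkeJulia}: reading \eqref{eq:MBPJulia} through the reciprocal relation $\frac{1-|w|^2}{|1-w|^2}=\big(\frac{|1-w|^2}{1-|w|^2}\big)^{-1}$ gives the claimed nonnegativity. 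Thus the first part is just the lemma re-expressed in the half-plane picture; the only substantive input is the elementary real-part identity above.

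For the second assertion I would argue by a direct estimate rather than through real parts. Since $o(|1-z_n|^3)\subseteq O(|1-z_n|)$, the hypothesis \eqref{eq:MBPfequalsBstrongCorBlaschkeJulia} together with Lemma~\ref{lem:MBPsequenceangularderivative} yields $\alpha_f\in(0,\infty)$ and $f(1)=B(1)=1$, so that $f$ (like $B$) has a finite non-zero angular derivative at $1$. The key step is then the algebraic identity
\[
\frac{1+f}{1-f}-\frac{1+B}{1-B}=\frac{2\,(f-B)}{(1-f)(1-B)},
\]
whose numerator at $z_n$ is $2\,(f(z_n)-B(z_n))=o(|1-z_n|^3)$ by \eqref{eq:MBPfequalsBstrongCorBlaschkeJulia}. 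For the denominator, the existence of finite non-zero angular derivatives gives, along the non-tangential sequence,
\[
\frac{1-f(z_n)}{1-z_n}\to f'(1)=\alpha_f\neq0,\qquad\frac{1-B(z_n)}{1-z_n}\to B'(1)=\alpha_B\neq0,
\]
so that $(1-f(z_n))(1-B(z_n))=\alpha_f\alpha_B\,(1-z_n)^2\,(1+o(1))$ is of exact order $|1-z_n|^2$. Dividing, the quotient is $o(|1-z_n|^3)/|1-z_n|^2=o(|1-z_n|)$, which is precisely \eqref{eq:holfunctionissmall}.

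The only place where the hypotheses are genuinely used --- and hence the main point to get right --- is controlling the \emph{exact} order of vanishing of the denominator $(1-f(z_n))(1-B(z_n))$ in the second part: the conclusion $o(|1-z_n|)$ rests on this product being comparable to $|1-z_n|^2$ from both sides, i.e.\ on $1-f(z_n)$ and $1-B(z_n)$ being comparable to $1-z_n$. This is secured by the finiteness and non-vanishing of the angular derivatives, with $\alpha_B\in(0,\infty)$ assumed in Lemma~\ref{lem:MBPBlaschkeJulia} and $\alpha_f\in(0,\infty)$ supplied by Lemma~\ref{lem:MBPsequenceangularderivative}; everything else reduces to the Cayley-transform identity and the algebraic simplification above.
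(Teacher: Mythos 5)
Your proof of the first assertion is the same as the paper's --- the identity $\Re\frac{1+w}{1-w}=\frac{1-|w|^2}{|1-w|^2}$ combined with \eqref{eq:MBPJulia} --- but both you and the paper gloss over a direction issue in the final step. Reciprocating \eqref{eq:MBPJulia} yields
\[
\frac{1-|f(v)|^2}{|1-f(v)|^2}\ \geq\ \frac{1}{A}\,\frac{1-|B(v)|^2}{|1-B(v)|^2},
\]
i.e.\ non-negativity of the real part of $\frac{1+f}{1-f}-\frac{1}{A}\,\frac{1+B}{1-B}$; this coincides with the claimed non-negativity of \eqref{eq:MBPrealpartofharmonicfunction} only when $A\leq 1$ (in particular when $A=1$). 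For $A>1$ the assertion with the factor $A$ in front is actually false: take $B(z)=z$ (a maximal Blaschke product for every $f$, since its critical set is empty) and $f(z)=\frac{z-a}{1-az}$ with $a\in(0,1)$, a hyperbolic automorphism fixing $\pm1$; then $f(z)=z+O(|1-z|)$, $A=\alpha_f=\frac{1+a}{1-a}>1$, and $\frac{1+f(z)}{1-f(z)}=\frac{1}{A}\,\frac{1+z}{1-z}$, so the real part of \eqref{eq:MBPrealpartofharmonicfunction} equals $\left(\frac{1}{A}-A\right)\Re\frac{1+z}{1-z}<0$ on all of $\D$, hence on any admissible $V$. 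So what your argument (and the paper's equally terse ``it immediately follows'') really proves is the statement with $1/A$ in place of $A$. This is harmless in context, because under \eqref{eq:MBPfequalsBstrongCorBlaschkeJulia} and in the only application of the corollary (Step~2 of the proof of Theorem~\ref{thm:sequenceBurnsKrantzforMBP}) one has $A=1$ by the last part of Lemma~\ref{lem:MBPBlaschkeJulia}; but you should state this normalization rather than claim the reciprocal relation ``gives the claimed nonnegativity'' outright.

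Your proof of the second assertion is correct and takes a genuinely different route from the paper. The paper expands $\frac{1+f}{1-f}$ as a geometric series in the quantity $\frac{f-B}{1-B}$, which is $o(|1-z_n|^2)$ by \eqref{eq:MBPfequalsBstrongCorBlaschkeJulia} and $\alpha_B\in(0,\infty)$, obtaining
\[
\frac{1+f(z_n)}{1-f(z_n)}=\frac{1+B(z_n)}{1-B(z_n)}+\frac{f(z_n)-B(z_n)}{1-B(z_n)}+\frac{1+f(z_n)}{1-B(z_n)}\sum_{k=1}^\infty\left(\frac{f(z_n)-B(z_n)}{1-B(z_n)}\right)^k,
\]
so that only the angular derivative of $B$ is needed; the lower bound on $|1-f(z_n)|$ is extracted from the smallness of $\frac{f-B}{1-B}$ itself. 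You instead use the exact identity $\frac{1+f}{1-f}-\frac{1+B}{1-B}=\frac{2(f-B)}{(1-f)(1-B)}$ and control the denominator by Julia--Wolff--Carath\'eodory applied to both $f$ and $B$, the finiteness and non-vanishing of $\alpha_f$ being correctly supplied by Lemma~\ref{lem:MBPsequenceangularderivative} (whose $O(|1-z_n|)$ hypothesis follows from \eqref{eq:MBPfequalsBstrongCorBlaschkeJulia}, with $B(1)=1$ and $\alpha_B\in(0,\infty)$ assumed in Lemma~\ref{lem:MBPBlaschkeJulia}). Your version is more direct --- no series, no $\eps$ --- at the mild cost of invoking the angular derivative of $f$, which the paper's computation avoids; both arguments are complete and yield \eqref{eq:holfunctionissmall}.
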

Recall the \emph{(two-point) Schwarz-Pick inequality} for holomorphic functions $f:\D\to\D$, namely
\begin{equation}\label{eq:SchwarzPick}
	\Bigg\vert \frac{f(z)-f(v)}{1-\cc{f(z)}f(v)}\Bigg\vert \leq \Bigg\vert\frac{z-v}{1-\cc{z}v}\Bigg\vert\qquad (z,v\in\D).
\end{equation}
Rewriting~\eqref{eq:SchwarzPick} and taking appropriate limits in order to use~\eqref{eq:JuliaCoefficient} gives one proof of the classical Julia inequality; the details can be found e.g.\ in~\cite[Lem.~1.6.2]{garciaFiniteBlaschkeProducts2018a}. Our idea for the proof of Lemma~\ref{lem:MBPBlaschkeJulia} is roughly the same. For this purpose, we need a ``Blaschke version'' of~\eqref{eq:SchwarzPick} first.
\begin{lemma}\label{lem:MBPintegratedNehari}
	Let $f:\D\to\D$ be a holomorphic function and $B$ a maximal Blaschke product for~$f$. Further, let $V\subseteq\D$ such that $B$ is injective on $V$ and such that $B(V)$ is hyperbolically convex. Then
	\begin{equation}\label{eq:MBPintegratedNehari}
		\left\vert\frac{f(v)-f(z)}{1-\cc{f(z)}f(v)}\right\vert\leq\left\vert\frac{B(v)-B(z)}{1-\cc{B(z)}B(v)}\right\vert\qquad\text{for all }z,v\in V\,.
	\end{equation}
\end{lemma}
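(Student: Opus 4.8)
The plan is to read \eqref{eq:MBPintegratedNehari} as an \emph{integrated} form of the infinitesimal Nehari--Schwarz inequality \eqref{eq:NehariSchwarz} from Theorem~\ref{th:NehariSchwarz}, the integration being performed along a carefully chosen curve in $V$. First I would pass from the pseudohyperbolic quantities in \eqref{eq:MBPintegratedNehari} to hyperbolic distances: since $\dD(a,b)=2\tanh^{-1}\bigl\vert\tfrac{a-b}{1-\cc ab}\bigr\vert$ and $2\tanh^{-1}$ is strictly increasing on $[0,1)$, the asserted inequality \eqref{eq:MBPintegratedNehari} is equivalent to
\[
	\dD\bigl(f(z),f(v)\bigr)\leq\dD\bigl(B(z),B(v)\bigr)\qquad\text{for all }z,v\in V.
\]
Fix $z,v\in V$; the case $z=v$ is trivial, so assume $z\neq v$.

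The geometric heart of the argument is to join $z$ and $v$ by the $B$-preimage of a \emph{geodesic}. Since the holomorphic $B$ is injective on (the open set) $V$, its derivative does not vanish there, so $B\colon V\to B(V)$ is conformal with holomorphic inverse $B^{-1}\colon B(V)\to V$. Because $B(V)$ is hyperbolically convex, the geodesic segment $[B(z),B(v)]_h$ lies entirely in $B(V)$; let $\eta\colon[0,1]\to B(V)$ parametrize it from $B(z)$ to $B(v)$ and put $\gamma:=B^{-1}\circ\eta$. Then $\gamma$ is a smooth curve in $V$ with $\gamma(0)=z$ and $\gamma(1)=v$ (here injectivity is used), while $B\circ\gamma=\eta$ traces out the geodesic, whence
\[
	\ell_h(B\circ\gamma)=\ell_h\bigl([B(z),B(v)]_h\bigr)=\dD\bigl(B(z),B(v)\bigr).
\]

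It remains to compare hyperbolic lengths through Nehari's inequality. Expanding the hyperbolic length of $f\circ\gamma$ and applying \eqref{eq:NehariSchwarz} pointwise along $\gamma$ gives
\[
	\ell_h(f\circ\gamma)=\int_0^1\frac{|f'(\gamma(t))|\,|\gamma'(t)|}{1-|f(\gamma(t))|^2}\,dt
	\leq\int_0^1\frac{|B'(\gamma(t))|\,|\gamma'(t)|}{1-|B(\gamma(t))|^2}\,dt=\ell_h(B\circ\gamma).
\]
On the other hand, $f\circ\gamma$ is a curve joining $f(z)$ and $f(v)$, so the characterization of $\dD$ as the infimum of hyperbolic lengths of connecting curves yields $\dD(f(z),f(v))\leq\ell_h(f\circ\gamma)$. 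Chaining these three facts produces $\dD(f(z),f(v))\leq\dD(B(z),B(v))$, which is the reformulated claim, and hence \eqref{eq:MBPintegratedNehari}.

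The only genuinely delicate point is the construction of $\gamma$, and it is exactly here that both hypotheses enter: injectivity of $B$ on $V$ forces $B'\neq0$, making $B^{-1}$ holomorphic so that $\gamma$ is smooth and the two length integrals are finite, while hyperbolic convexity of $B(V)$ guarantees $[B(z),B(v)]_h\subseteq B(V)$, so that the pullback is defined and remains in $V$. Once $\gamma$ is available, the estimate is a one-line integration of Theorem~\ref{th:NehariSchwarz}, and the whole result is the expected two-point counterpart of \eqref{eq:SchwarzPick} with $B$ in place of the identity.
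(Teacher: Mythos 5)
Your proof is correct and follows essentially the same route as the paper's own: reformulate \eqref{eq:MBPintegratedNehari} as $\dD(f(z),f(v))\leq\dD(B(z),B(v))$, pull back the geodesic segment $[B(z),B(v)]_h$ (which lies in $B(V)$ by hyperbolic convexity) through the injective restriction of $B$, and integrate the Nehari--Schwarz inequality \eqref{eq:NehariSchwarz} along that curve, comparing with the defining property of $\dD$ as an infimum of hyperbolic lengths. Your explicit observation that injectivity forces $B'\neq 0$, so that the pullback curve is smooth, is a detail the paper leaves implicit; otherwise the two arguments coincide.
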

\begin{proof}
	We make use of the following two (hyperbolic) geometric observations
	: Let $z,v\in\D$. First, in terms of the hyperbolic metric~$\dD$ (see~\eqref{eq:HyperbolicMetric}), inequality~\eqref{eq:MBPintegratedNehari} is equivalent to
	\begin{equation}
		\dD\big(f(z),f(v)\big)\leq \dD\big(B(z),B(v)\big).
	\end{equation}
	Second, if~$\gamma$ is any curve in~$\D$ connecting~$z$ and~$v$, then $f\circ\gamma$ is a curve connecting~$f(z)$ and~$f(v)$, and it follows
	\begin{equation}\label{eq:MBPNehariahypdistance1}
		\dD\big(f(z),f(v)\big)\leq \ell_h(f\circ \gamma) = \int_{\gamma}\frac{|f'(t)|}{1-|f(t)|^2} \, dt \leq \int_{\gamma}\frac{|B'(t)|}{1-|B(t)|^2} \, dt.
	\end{equation}
	In the last step we have used Theorem~\ref{th:NehariSchwarz}. Assume for a moment that we can find~$\gamma$ such that~$B$ is injective on the trace of~$\gamma$ and satisfies $B\circ\gamma=[B(z),B(v)]_h$. In this case, we can conclude that
	\begin{equation}\label{eq:MBPNehariahypdistance2}
		\int_{\gamma}\frac{|B'(t)|}{1-|B(t)|^2}dt=\int_{B\circ\gamma}\frac{1}{1-|t|^2}dt=\int_{[B(v),B(z)]_h}\frac{1}{1-|t|^2}dt=\dD\big(B(v),B(z)\big).
	\end{equation}
	Therefore, it remains to show that such a curve~$\gamma$ exists: By assumption, $\hat{B}:=B\vert_V:V\to B(V)$ is bijective. Therefore, for every $u,w\in B(V)$ we set $\gamma:=\hat{B}^{-1}([u,w]_h)$.
\end{proof}
Having Lemma~\ref{lem:Stolzendinclusions} in mind, note that the set~$V$ in Lemma~\ref{lem:MBPintegratedNehari} can be chosen as an appropriate end of a Stolz region. Thus, we are now in a position to give the proof of Lemma~\ref{lem:MBPBlaschkeJulia}.
	\begin{proof}[Proof of Lemma~\ref{lem:MBPBlaschkeJulia}]
		We apply Lemma~\ref{lem:MBPsequenceangularderivative} which implies that
		\begin{equation}
			A=\lim_{n\to\infty}\frac{1-|f(z_n)|}{1-|B(z_n)|}=\lim_{n\to\infty}\frac{1-|f(z_n)|}{1-|z_n|}\frac{1-|z_n|}{1-|B(z_n)|}=\frac{f'(1)}{B'(1)}\in(0,\infty).
		\end{equation}
		Moreover, if we additionally assume \eqref{eq:MBPfequalsBnontangonsequenceintegratedNeharistrong}, then  Lemma~\ref{lem:MBPsequenceangularderivative} shows $f'(1)=B'(1)$. Hence, $A=1$ in this case.

		Next, since $(z_n)$ converges non-tangentially to~1, we can fix $m>0$ such that $z_n\in S(6m/7,1)$ eventually. Choose $M>0$ resp.\ $V\subseteq\D$ according to Corollary~\ref{cor:MBPregionwhereBisinjective} for $\xi=1$ such that $B(V)=E(m,1,M)$ and $E(6m/7,1,M\alpha_B)\subseteq V$. Then we can apply Lemma \ref{lem:MBPintegratedNehari} for $V$ which gives
		\begin{equation}
			\left\vert\frac{f(v)-f(z)}{1-\cc{f(z)}f(v)}\right\vert\leq\left\vert\frac{B(v)-B(z)}{1-\cc{B(z)}B(v)}\right\vert\qquad\text{for all }z,v\in V.
		\end{equation}
		The previous inequality is equivalent to
		\begin{equation}\label{eq:auxillaryJuliabeforetakinglimits}
			\frac{|1-\cc{f(z)}f(v)|^2}{1-|f(v)|^2}\leq\frac{1-|f(z)|}{1-|B(z)|}\frac{|1-\cc{B(z)}B(v)|^2}{1-|B(v)|^2}\frac{1+|f(z)|}{1+|B(z)|} \qquad\text{for all }z,v\in V.
		\end{equation}
		 The assumption $z_n\to1$ allows us to find an index $N$ such that $z_n\in E(6m/7,1,M\alpha_B)$ for all $n\geq N$. Consequently, $B(z_n)\subseteq E(m,1,M)$ for all $n\geq N$. Therefore, we can choose $z=z_n$ for $n\geq N$ in~\eqref{eq:auxillaryJuliabeforetakinglimits}. Taking the limit $n\to\infty$ yields \eqref{eq:MBPJulia}.
	\end{proof}
	\begin{proof}[Proof of Corollary~\ref{cor:MBPBlaschkeJulia}]
		It immediately follows from~\eqref{eq:MBPJulia} that the real part of~\eqref{eq:MBPrealpartofharmonicfunction} is non-negative for all $z\in V$. For the additional statement, we adapt the argumentation in \cite[Prop.~3.2]{baraccoBurnsKrantzTypeTheorem2006}: \eqref{eq:MBPfequalsBstrongCorBlaschkeJulia} and $\alpha_B\in(0,\infty)$ guarantee that
		\begin{equation}
			\frac{f(z_n)-B(z_n)}{1-B(z_n)}=o(|1-z_n|^2).
		\end{equation}
		Therefore, given $\eps\in(0,1)$, for $n$ sufficiently large, we have
		\begin{equation}
			\left\vert\frac{f(z_n)-B(z_n)}{1-B(z_n)}\right\vert\leq\eps<1.
		\end{equation}
		For all of those $n$ we can compute
		\begin{align}
			\frac{1+f(z_n)}{1-f(z_n)}&=\frac{\big(1+f(z_n)\big)/\big(1-B(z_n)\big)}{1-\big(f(z_n)-B(z_n)\big)/\big(1-B(z_n)\big)}\\
			&=\frac{1+B(z_n)}{1-B(z_n)}+\frac{f(z_n)-B(z_n)}{1-B(z_n)}+\frac{1+f(z_n)}{1-B(z_n)}\sum_{k=1}^\infty\left(\frac{f(z_n)-B(z_n)}{1-B(z_n)}\right)^k\\
			&=\frac{1+B(z_n)}{1-B(z_n)}+o(|1-z_n|).\qedhere
		\end{align}
	\end{proof}

\section{Proof of Theorem~\ref{thm:sequenceBurnsKrantzforMBP}}\label{sec:proof:thm:sequenceBurnsKrantzforMBP}

	\begin{proof}[Proof of Theorem \ref{thm:sequenceBurnsKrantzforMBP}]
		Without loss of generality we assume $\xi=B(\xi)=1$. Our goal is to apply Theorem \ref{thm:MBPsequenceBurnsKrantzforhypderivatives}. For this purpose we need to understand the behaviour of the quotient
		\[\frac{|f'(z_n)|}{|B'(z_n)|}\frac{1-|B(z_n)|^2}{1-|f(z_n)|^2}\qquad \text{as }n\to\infty.\] By~\eqref{eq:MBPfequalsBnontangonsequence} and $\alpha_B\in(0,\infty)$ we have
		\begin{equation}\label{eq:MBPquotientofderivativesisotwo1}
			\frac{1-|f(z_n)|^2}{1-|B(z_n)|^2}=1+\frac{|B(z_n)|^2-|f(z_n)|^2}{1-|B(z_n)|^2}
			=1+\frac{o(|1-z_n|^3)}{1-|B(z_n)|^2}=1+o(|1-z_n|^2)
		\end{equation}
		as $n\to\infty$. It turns out that the quotient $|f'(z_n)/B'(z_n)|$ has the same asymptotic behaviour along~$(z_n)$. We divide the proof of this claim into several steps.

		\textsc{Step 1:} Fix $m>0$ such that $z_n\in S(4m/7,1)$ eventually. Choose $M>0$ resp.\ $V\subseteq\D$ according to Corollary~\ref{cor:MBPregionwhereBisinjective} such that $B(V)=E(m,1,M)$ and the corresponding inclusions hold. Denote by~$G$ a conformal map from~$\D$ onto~$V$. We first show that~$G$ extends to a homeomorphism of~$\cc{\D}$ onto~$\cc{V}$. In particular, this then allows us to assume $G(1)=1$.

		Indeed, Corollary~\ref{cor:MBPregionwhereBisinjective} shows that~$B$ is injective on~$\cc{V}\cap\D$. Combined with~$B$ having angular limit $B(1)=1$, this shows that the map
		\[\tilde{B}:\cc{V}\to\cc{B(V)}=\cc{E(m,1,M)},\quad \tilde{B}(v)=B(v)\]
		is continuous. Moreover, since $|B(v)|<1$ for all $v\in\cc{V}\cap\D$, the map $\tilde{B}$ is injective. Thus, as a continuous bijective map on a compact set, $\tilde{B}$ has a continuous inverse.\\
		Further observe that $E(m,\xi,M)$ is a Jordan domain (i.e. $\partial E(m,\xi,M)$ is a Jordan curve, that is an injective and continuous curve). Therefore, there is a conformal map~$\phi$ of~$\D$ onto~$E(m,1,M)$ that extends to a homeomorphism of~$\cc{\D}$ onto~$\cc{E(m,1,M)}$, too (see e.g.~\cite[Th.~2.6 and Cor.~2.8]{pommerenkeBoundaryBehaviourConformal1992}). If we assume $\phi(1)=1$, then we can choose~$G:=\tilde{B}^{-1}\circ\phi$.

		\textsc{Step 2:} By Corollary~\ref{cor:MBPBlaschkeJulia} the function
		\[\D\ni w\mapsto\frac{1+f(G(w))}{1-f(G(w))}-\frac{1+B(G(w))}{1-B(G(w))}\]
		has non-negative real part on $\D$. Hence, we find $F:V\to\D$ holomorphic such that
		\begin{equation}\label{eq:MBPuwithFandG}
			\frac{1+f(G(w))}{1-f(G(w))}-\frac{1+B(G(w))}{1-B(G(w))}=\frac{1+F(G(w))}{1-F(G(w))}\qquad\text{for all }w\in\D\,.
		\end{equation}
		Moreover, by \eqref{eq:holfunctionissmall} we have
		\begin{equation}\label{eq:MBPrealpartquotientFisotwo}
			\Re\left(\frac{1+F(z_n)}{1-F(z_n)}\right)=o(|1-z_n|)\quad\text{as }n\to\infty.
		\end{equation}
		For $w\in\D$ set $z:=G(w)$. Following an idea in~\cite[p.~9ff.]{ahlforsConformalInvariantsTopics2010} we differentiate~\eqref{eq:MBPuwithFandG} which leads to
		\begin{equation}\label{eq:MBPuwithFandGDifferentiated}
			\frac{f'(z)}{B'(z)}=\frac{(F\circ G)'(w)}{(1-(F\circ G)(w))^2}\frac{(1-f(z))^2}{G'(w)B'(z)}+\left(\frac{1-f(z)}{1-B(z)}\right)^2.
		\end{equation}

		\textsc{Step 2a:} For $z=z_n$ (resp.\ $w=G^{-1}(z_n)$) the assumption \eqref{eq:MBPfequalsBnontangonsequence} and $\alpha_B\in(0,\infty)$ imply
		\begin{equation}\label{eq:MBPquotientofderivativesisotwo2}
			\left(\frac{1-f(z_n)}{1-B(z_n)}\right)^2=\left(1+\frac{o(|1-z_n|^3)}{1-B(z_n)}\right)^2=1+o(|1-z_n|^2)\quad\text{as }n\to\infty.
		\end{equation}

		\textsc{Step 2b:} We prove that a similar estimate holds for the first term on the RHS in \eqref{eq:MBPuwithFandGDifferentiated}. More specifically, taking into account that $\alpha_B\in(0.\infty)$ implies $B'(z_n)=O(1)$ as $n\to\infty$, we show that
		\begin{equation}
			I(z_n):=\frac{(F\circ G)'(G^{-1}(z_n))}{(1-(F\circ G)(G^{-1}(z_n))^2}\frac{(1-f(z_n))^2}{G'(G^{-1}(z_n))}=o(|1-z_n|^2)\quad\text{as }n\to\infty.
		\end{equation}
		In fact, applying the Schwarz-Pick lemma to the holomorphic function $F\circ G:\D\to\D$ yields
		\begin{align}
			|I(z_n)|&\leq \frac{1}{1-|G^{-1}(z_n)|^2}\frac{1-|(F\circ G)(G^{-1}(z_n))|^2}{|1-(F\circ G)(G^{-1}(z_n))|^2}\frac{|1-f(z_n)|^2}{|G'(G^{-1}(z_n))|}\\
			&=\Re\left(\frac{1+F(z_n)}{1-F(z_n)}\right)\left\vert\frac{1-f(z_n)}{1-z_n}\right\vert^2\frac{|1-G^{-1}(z_n)|}{1-|G^{-1}(z_n)|}\frac{|1-z_n||(G^{-1})'(z_n)|}{|1-G^{-1}(z_n)|}\frac{|1-z_n|}{1+|G^{-1}(z_n)|}\,.
		\end{align}
		As $n\to\infty$, the first factor is $o(|1-z_n|)$ by \eqref{eq:MBPrealpartquotientFisotwo}. The second factor is~$O(1)$ by Lemma~\ref{lem:MBPsequenceangularderivative}. The last factor is $O(|1-z_n|)$. Therefore, if we can show that the third and fourth factor are both~$O(1)$, we could conclude
		\begin{equation}\label{eq:MBPquotientofderivativesisotwo3}
			I(z_n)=o(|1-z_n|^2)\quad\text{as }n\to\infty.
		\end{equation}

		\textsc{Step 2c:} Assume for a moment that~\eqref{eq:MBPquotientofderivativesisotwo3} holds. Together with~\eqref{eq:MBPquotientofderivativesisotwo1}, \eqref{eq:MBPuwithFandGDifferentiated} and~\eqref{eq:MBPquotientofderivativesisotwo2} this then implies
		\begin{equation}
			\frac{|f'(z_n)|}{|B'(z_n)|}\frac{1-|B(z_n)|^2}{1-|f(z_n)|^2}=1+o\big(|1-z_n|^2\big)\quad\text{as }n\to\infty\,.
		\end{equation}
		Hence, we can apply Theorem \ref{thm:MBPsequenceBurnsKrantzforhypderivatives} and conclude that $f=T\circ B$ for some $T\in\Aut(\D)$. Finally, \eqref{eq:MBPfequalsBnontangonsequence} implies that $T(z)=z$ on $\D$.

		\textsc{Step 3:} In view of \textsc{Step 2c} it remains to prove~\eqref{eq:MBPquotientofderivativesisotwo3} or, equivalently,
		\begin{equation}\label{eq:factorswithGinverseareO1}
			\frac{|1-G^{-1}(z_n)|}{1-|G^{-1}(z_n)|}=O(1)\quad\text{and}\quad \frac{|1-z_n||(G^{-1})'(z_n)|}{|1-G^{-1}(z_n)|}=O(1)\qquad\text{as }n\to\infty.
		\end{equation}

		\textsc{Step 3a:} We need some preliminary observations: Choose $\beta\in[0,\pi/2]$ such that $\tan(\beta/2)=\tanh(m)$. Then the map $C:\D\to\RH:=\{w\in\C \, :\, \Re w>0\}$ defined by $C(z)=(1-z)/(1+z)$ maps $S(m,1)$ onto the sector
		\[S_{\RH}(\beta,0):=\{w=r e^{i\theta}\in\RH\,:\, r>0, \,-\beta < \theta<\beta\}\]
		(see~\cite[Prop.~2.2.7]{abateHolomorphicDynamicsHyperbolic2022}). The map $\rho_\beta:S_{\RH}(0,\beta)\to\RH$, $w\mapsto w^{\pi/(2\beta)}$ is well-defined and, in particular, onto. Therefore, $\psi=C^{-1}\circ \rho_\beta\circ C$ maps $S(m,1)$ onto $\D$. If we are given $m'<m$, then $\psi(S(m',1))=S(\tilde{m},1)\subsetneq\D$ for some $\tilde{m}>0$.

		Now consider the open set $H:=\psi(E(m,1,M))\subseteq\D$ and a conformal map $\varphi$ mapping~$\D$ onto~$H$ with $\varphi(1)=1$. By construction, there is a subarc $J\subseteq\partial\D$ such that $1\in J$ and ${\partial H\cap \partial \D=J}$. Therefore, we find an open subarc $I\subseteq\partial \D$ containing 1 such that $\varphi(I)\subseteq J$. The Schwarz reflection principle (see e.g.~\cite[p.~4]{pommerenkeBoundaryBehaviourConformal1992}) shows that $\varphi$ admits an analytic continuation to some neighborhood of 1. In particular, $\alpha_\varphi\in(0,\infty)$.

		\textsc{Step 3b:} In view of the map $G^{-1}$ and the domain $V$ fixed in \textsc{Step 1} of our proof, we can write $G^{-1}(v)=(\varphi^{-1}\circ \psi \circ B)(v)$ for all $v\in V$. Our choice of $m$ and $M$ combined with Corollary~\ref{cor:MBPregionwhereBisinjective} guarantees that $(B(z_n))_{n\geq N}\subseteq E(5m/7,1,2M)\subsetneq E(m,1,M)$ for some index~$N$. Therefore, we find $m'>0$ such that $(\psi(B(z_n)))_{n\geq N}\subseteq S(m',1)\cap H$. Moreover, since $\alpha_\varphi\in(0,\infty)$, we can apply Lemma~\ref{lem:Stolzendinclusions} (for $\eps=\min\{1/3m',\log2\}$) and conclude that there is $M'>0$ such that
		\[E(m',1,M')\subseteq \varphi\big(E(4m'/3,1,\alpha_\varphi M'/2)\big)	.\]
		Consequently, since $(\psi(B(z_n)))_{n\geq N'}\subseteq E(m',1,M')$ for some index $N'\geq N$, we conclude that
		\[\big(G^{-1}(z_n)\big)_{n\geq N'}=\big(\varphi^{-1}(\psi(B(z_n)))\big)_{n\geq N'}\subseteq  E(4m'/3,1,\alpha_\varphi M'/2).\]
		In other words, $(G^{-1}(z_n))$ converges non-tangentially to~1 which shows that
		\[\frac{|1-G^{-1}(z_n)|}{1-|G^{-1}(z_n)|}=O(1)\qquad\text{as }n\to\infty.\]

		\textsc{Step 3c:} In order to prove the second identity in~\eqref{eq:factorswithGinverseareO1}, we write
		\begin{multline}\label{eq:VisserOstrowskiforGinverse}
			\frac{(1-z_n)(G^{-1})'(z_n)}{1-G^{-1}(z_n)}=\frac{(1-z_n)(\varphi^{-1}\circ\psi\circ B)'(z_n)}{1-(\varphi^{-1}\circ\psi\circ B)(z_n)}
			\\
			=\frac{(1-z_n)B'(z_n)}{1-B(z_n)}
			\frac{\big(1-B(z_n)\big)\psi'\big(B(z_n)\big)}{1-\psi(B(z_n))}
			\frac{\big(1-(\psi\circ B)(z_n)\big)(\varphi^{-1})'\big(\psi( B(z_n))\big)}{1-\varphi^{-1}\big((\psi\circ B)(z_n)\big)}.
		\end{multline}
		Since $z_n\to1$ non-tangentially and $\alpha_B\in(0,\infty)$, the first factor is~$O(1)$. Similarly, since~$\varphi^{-1}$ is holomorphic at 1 and $\varphi^{-1}(1)=1$, the third factor is~$O(1)$, too. It remains to consider the second factor in~\eqref{eq:VisserOstrowskiforGinverse}. Denote $u_n:=B(z_n)$. Then
		\begin{equation}
			\frac{(1-u_n)\psi'(u_n)}{1-\psi(u_n)}
			=\frac{1+u_n}{1+\psi(u_n)}\frac{1-u_n}{1+u_n}\frac{1+\psi(u_n)}{1-\psi(u_n)}\psi'(u_n)=\frac{1+u_n}{1+\psi(u_n)}\frac{C(u_n)}{C(\psi(u_n))}\psi'(u_n).
		\end{equation}
		Using the explicit form of $\psi'$, the property $C=C^{-1}$ and $C\circ\psi=\rho_\beta\circ C$, we get
		\begin{align*}
			\frac{(1-u_n)\psi'(u_n)}{1-\psi(u_n)}
			&=\frac{\pi}{2\beta}\frac{1+u_n}{1+\psi(u_n)}\frac{4}{\big(1+\rho_\beta(C(u_n))\big)^2(1+u_n)^2}.
		\end{align*}
		Now $\psi(1)=1$ and $C(1)=0=\rho_\beta(0)$ as well as $u_n\to1$ as $n\to\infty$ imply
		\begin{equation}
			\lim_{n\to\infty}\frac{(1-u_n)\psi'(u_n)}{1-\psi(u_n)}=\frac{\pi}{2\beta}.
		\end{equation}
		This concludes the proof of
		\[\frac{|1-z_n||(G^{-1})'(z_n)|}{|1-G^{-1}(z_n)|}=O(1)\qquad\text{as }n\to\infty\]
		and hence the proof of Theorem~\ref{thm:sequenceBurnsKrantzforMBP}.
	\end{proof}

\begin{concludingremarks}
		\begin{enumerate}[(a)]
			\item\label{it:MBPBurnsKrantzNonTangential} If we replace the assumption \eqref{eq:MBPfequalsBnontangonsequence} in Theorem~\ref{thm:sequenceBurnsKrantzforMBP} by
			\begin{equation}
				f(z)=B(z)+o(|\xi-z|^3)\quad \text{as }z\rightarrow\xi\text{ non-tangentially},
			\end{equation}
			then one can give a different shorter proof using Cauchy's integral formula. More precisely, one can adapt the argumentation in \cite[Prop.~8.1]{bracciNewSchwarzPickLemma2023}, see also \cite[Th.~2.7.4]{abateHolomorphicDynamicsHyperbolic2022}.
			\item The proof of Theorem~\ref{thm:sequenceBurnsKrantzforMBP} can also be simplified if we assume that the MBP~$B$ is a finite Blaschke product or, more general, that~$B$ is holomorphic at~$1$: in this case one can construct the map~$G$ (from the proof of Theorem~\ref{thm:sequenceBurnsKrantzforMBP}) such that~$G$ and $G^{-1}$ are holomorphic at~$1$, too. Then, the claim in \textsc{Step 2} is obvious.
			\item The exponent 3 in Theorem~\ref{thm:sequenceBurnsKrantzforMBP} is sharp if we assume \eqref{eq:MBPfequalsBnontangonsequence} non-tangentially (see Part~\eqref{it:MBPBurnsKrantzNonTangential} above): Define $g:\D\to\D$, $g(z)=(1+3z^2)/(3+z^2)$ and let~$B$ be a MBP (for an arbitrary holomorphic self-map of~$\D$) with $\alpha_B\in(0,\infty)$. Then set $f:=g\circ B$.
		\end{enumerate}
\end{concludingremarks}

\section*{Acknowledgements}

The author thanks Oliver Roth for countless helpful and inspiring discussions.

\end{document}